\newcommand{\bV}{\mathbf V}
\newcommand{\bS}{\mathbf S}
\newcommand{\bhW}{\widehat{\mathbf W}}
\newcommand{\bhV}{\widehat{\mathbf V}}
\theoremstyle{plain}
\newtheorem{thm}{Theorem}[section]
\newtheorem{lem}[thm]{Lemma}
\newtheorem{prop}[thm]{Proposition}
\newtheorem{cor}[thm]{Corollary}
\theoremstyle{definition}
\newtheorem{eg}[thm]{Example}
\theoremstyle{remark}
\newtheorem{rmk}[thm]{Remark}
\def\Mustata{Mus\-ta\-\c{t}\u{a}\xspace}
\def\N{{\mathbf N}}
\def\Z{{\mathbf Z}}
\def\Q{{\mathbf Q}}
\def\C{{\mathbf C}}
\def\A{{\mathbf A}}
\def\P{{\mathbf P}}
\def\cD{\mathcal{D}}
\def\cE{\mathcal{E}}
\def\cG{\mathcal{G}}
\def\cH{\mathcal{H}}
\def\cK{\mathcal{K}}
\def\cM{\mathcal{M}}
\def\cN{\mathcal{N}}
\def\cO{\mathcal{O}}
\def\cU{\mathcal{U}}
\def\bS{{\bf S}}
\def\bN{{\bf N}}
\def\.{\cdot}
\def\^{\widehat}
\def\de{\partial}
\def\({\left(}
\def\){\right)}
\renewcommand{\and}{ \ \ \text{ and } \ \ }
\begin{document}

\title[Fourier Transform and Radon Transform for MHM]{Fourier Transform and Radon Transform for Mixed Hodge modules}
\author[B.~Dirks]{Bradley Dirks}

\address{Department of Mathematics, Stony Brook University, Stony Brook, NY 11794-3651, USA}

\email{bradley.dirks@stonybrook.edu}

\thanks{The author was supported by NSF-MSPRF grant DMS-2303070.}

\subjclass[2020]{32C38}

\maketitle

\begin{abstract} We give a generalization to bi-filtered $\cD$-modules underlying mixed Hodge modules of the relation between microlocalization along $f_1,\dots, f_r \in \cO_X(X)$ and vanishing cycles along $g = \sum_{i=1}^r y_i f_i$. This leads to an interesting relation between localization triangles.

As an application, we use these results to compare the $k$-plane Radon transform and the Fourier-Laplace transform for mixed Hodge modules. This is then applied to the Hodge module structure of certain GKZ systems.
\end{abstract}

\section{Introduction} In \cite{CD}, the Fourier-Laplace transform for monodromic mixed Hodge modules is studied. The transform is related to various functors from geometry, using as motivation the corresponding results for perverse sheaves \cite{KashShap}*{(10.3.31)}. Then, because these functors preserve the property of being a mixed Hodge module, this endows the Fourier-Laplace transform with a mixed Hodge module structure. As noted in \cite{MonoMHM1}*{}, the Fourier-Laplace transform of a monodromic $\cD$-module can actually underlie several distinct mixed Hodge modules, so throughout the present paper it is important to consistently use the structure as defined in \cite{CD}, which we briefly review here, to establish notation.

Let $M$ be a monodromic mixed Hodge module on the trivial vector bundle $E = X\times \A^r_z$, where $z_1,\dots, z_r$ are the coordinates on $\A^r_z$. Let $E^\vee = X\times \A^r_y$ be the dual bundle, with coordinates $y_1,\dots, y_r$. Denote the product as $\cE = E \times_X E^\vee = X\times \A^r_z \times \A^r_y$ with $g = \sum_{i=1}^r y_iz_i  \in \cO(\cE)$. Let $\sigma\colon E^\vee \to \cE$ be the inclusion of the zero section and let $p\colon \cE \to E, q\colon\cE \to E^\vee$ be the natural projections. The author and Chen constructed in \cite{CD} an isomorphism of $\cD_{E^\vee}$-modules
\[ {\rm FL}(\cM) = \cH^0\sigma^* \phi_gp^!(\cM)[-r].\]

In Section \ref{sect-FL} we make some remarks concerning the functor ${\rm FL}$ which are useful in the applications. There are a few important results: Proposition \ref{prop-main} shows that the Fourier-Laplace transform is the only non-vanishing cohomology of $\sigma^* \phi_g p^!(\cM)[-r]$. Moreover, the functor $\sigma^*$ could be replaced with $\sigma^!$ (see Corollary \ref{cor-lastFunctor}). Corollary \ref{cor-ProperFL} applies Proposition \ref{prop-main} to see that the Fourier-Laplace transform commutes with proper push-forward of the base, i.e., proper maps $f\colon X\to Y$. These results are known for constructible complexes and the Fourier-Sato transform, but our proofs are rather simple using Lemma \ref{lem-supportVanCycles}.

We then give a generalization of \cite{D-Microlocal}*{Prop. 3.4} to bi-filtered $\cD$-modules underlying mixed Hodge modules. Let $i_Z \colon Z \to X$ be the embedding of a closed subvariety defined by $f_1,\dots ,f_r \in \cO_X(X)$. Let $j_Z \colon X \setminus Z \to X$ be the inclusion of the complement.

Given a mixed Hodge module $M$ on $X$, one gets a mixed Hodge module $\Gamma_* M$ on $X\times \A^r_t$ by pushing forward along the graph embedding $\Gamma\colon X\to X\times \A^r_t$. By applying the Verdier specialization functor, we get a monodromic mixed Hodge module ${\rm Sp}_Z(M) = {\rm Sp}(\Gamma_* M)$ on $X\times \A^r_z$. Finally, the Fourier-Laplace transform yields a mixed Hodge module $\mu_Z(M) = \mu(\Gamma_* M) = {\rm FL}{\rm Sp}(\Gamma_* M)$ on $X\times \A^r_y$, the microlocalization of $M$ along $Z$.

Alternatively, one can start with $M$ and consider $\pi^!(M)[-r]$, a mixed Hodge module on $X\times \A^r_y$, where $\pi \colon X\times \A^r_y \to X$ is the projection. There is a globally defined function $g = \sum_{i=1}^r y_i f_i$ on $X\times \A^r_y$, hence we get a mixed Hodge module $\phi_g(\pi^!(M)[-r])$ on $X\times \A^r_y$. Let $(\cM,F,W)$ be the bi-filtered $\cD_X$-module underlying $M$.

\begin{thm} \label{thm-Microlocalization} There is a natural isomorphism of bi-filtered $\cD_{X\times \A^r_y}$-modules
\[ \phi_g(\pi^!(\cM)[-r],F,W) \cong (\mu_Z(\cM),F,W).\]
\end{thm}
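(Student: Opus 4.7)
My plan is to deduce the theorem from Proposition \ref{prop-main} applied to the monodromic MHM ${\rm Sp}(\Gamma_+ \cM)$, combined with standard commutation identities between Verdier specialization, pullback, closed-immersion pushforward, and vanishing cycles, each upgraded to the MHM level.

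By definition $\mu_Z(\cM) = {\rm FL}({\rm Sp}(\Gamma_+ \cM))$, so Proposition \ref{prop-main} gives the bi-filtered isomorphism
\[ \mu_Z(\cM) \cong \cH^0 \sigma^* \phi_G \, p^! {\rm Sp}(\Gamma_+ \cM)[-r], \]
where $p : X \times \A^r_z \times \A^r_y \to X \times \A^r_z$ is the projection, $\sigma$ is the inclusion at $z=0$, and $G = \sum y_i z_i$. On the ambient space $X \times \A^r_t \times \A^r_y$ I introduce the function $h = \sum y_i t_i$, the smooth projection $p_0$ to $X \times \A^r_t$, and the extended graph $\widetilde{\Gamma} = \Gamma \times \mathrm{id}$, noting that $h \circ \widetilde{\Gamma} = g$. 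The right-hand side is then rewritten, in order, via: smooth base change for Verdier specialization in the $\A^r_y$-direction, yielding $p^! {\rm Sp}(\Gamma_+ \cM) \cong {\rm Sp}(p_0^! \Gamma_+ \cM)$; the central compatibility $\phi_G \circ {\rm Sp} \cong {\rm Sp} \circ \phi_h$, since $h$ restricts to $G$ on the normal bundle; ordinary base change for the closed graph embedding, $p_0^! \Gamma_+ \cM \cong \widetilde{\Gamma}_+ \pi^!(\cM)$; and proper-pushforward compatibility for vanishing cycles, $\phi_h \widetilde{\Gamma}_+ \pi^!(\cM) \cong \widetilde{\Gamma}_+ \phi_g \pi^!(\cM)$. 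After these substitutions, the composite $\sigma^* \circ {\rm Sp} \circ \widetilde{\Gamma}_+$ appears, which is seen to be the identity by analyzing the $V$-filtration of the graph embedding into the deformation space and using that $\widetilde{\Gamma}$ meets the zero section in $Z \times \A^r_y$ where the output is already supported. The final result is $\phi_g \pi^!(\cM)[-r]$, concentrated in degree zero by the same cohomological vanishing as in Proposition \ref{prop-main}.

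The main obstacle is the bi-filtered compatibility $\phi_G \circ {\rm Sp} \cong {\rm Sp} \circ \phi_h$; on the underlying $\cD$-modules it is essentially \cite{D-Microlocal}*{Prop. 3.4}. The cleanest upgrade is to pass to the deformation to the normal cone $M \to \A^1_u$ of $X \times \{0\} \subset X \times \A^r_t$, on which Verdier specialization is realized as nearby cycles $\psi_u$ along the deformation parameter and the function $h$ extends so as to restrict to $G$ on the normal bundle. The compatibility then reduces to the standard commutation of $\psi_u$ with $\phi_h$ as MHM functors in Saito's theory, which preserves both the Hodge filtration $F$ (through the controlling $V$-filtrations) and the weight filtration $W$. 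All other steps in the chain are formal MHM base-change isomorphisms, hence automatically bi-filtered, so their composition yields the desired bi-filtered isomorphism.
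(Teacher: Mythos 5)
Your strategy --- rewriting $\mu_Z(\cM)={\rm FL}\,{\rm Sp}(\Gamma_*\cM)$ through the geometric description of ${\rm FL}$ and then commuting everything past the Verdier specialization --- is genuinely different from the paper's, but it breaks down at exactly the step you flag as the main obstacle. The compatibility $\phi_G\circ{\rm Sp}\cong{\rm Sp}\circ\phi_h$ is not a formal consequence of Saito's formalism: nearby and vanishing cycle functors along two \emph{different} functions do not commute in general (this is the well-known failure of $\psi_{f}\psi_{f'}\cong\psi_{f'}\psi_{f}$ absent a noncharacteristic/``sans pente'' hypothesis), and passing to the deformation to the normal cone merely trades the commutation of $\phi_G$ with ${\rm Sp}$ for the commutation of $\phi_h$ with $\psi_u$ along the deformation parameter, which is the same kind of non-formal statement; there is no ``standard commutation of $\psi_u$ with $\phi_h$'' in Saito's theory to invoke, and certainly not one that automatically tracks $F$ and $W$. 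Your fallback for the underlying $\cD$-modules is \cite{D-Microlocal}*{Prop.~3.4}, which is precisely the unfiltered version of the theorem being proven, so the bi-filtered upgrade of this one step carries the entire content of the theorem and is left unestablished.

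The paper avoids any such commutation by working explicitly: it forms the partial microlocalization $\cM_g=\gamma_+(\cM[y_1,\dots,y_r])[\partial_\xi^{-1}]$ of $\pi^!(\cM)[-r]$ along $g$, writes down the concrete $\cO_X$-linear map $\varphi(m\,y^\alpha\partial_\xi^{j}\delta_g)=(-1)^{|\alpha|+j}m\,\partial_t^{\alpha}\delta_f$ to $\Gamma_+(\cM)$, checks that it intertwines $y_i$ with $-\partial_{t_i}$, $\partial_{y_i}$ with $t_i$, and $\theta_y$ with $s$ (Proposition \ref{prop-varphiProperties}), restricts to the eigenspaces $E^{(\ell)}$ of $\theta_y-s$ to obtain isomorphisms, and then verifies by the uniqueness characterization of $V$-filtrations together with direct index bookkeeping that $\varphi$ carries the microlocal $V$-, Hodge, and weight filtrations to those of $\Gamma_+(\cM)$ with exactly the shifts demanded by the Fourier--Laplace formulas of Theorem \ref{thm-FourierHodgeWeight} (Proposition \ref{prop-microlocalHodgeVW}). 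To make your route work you would need an honest bi-filtered proof of $\phi_G\circ{\rm Sp}\cong{\rm Sp}\circ\phi_h$, and the only known way to produce it is essentially the explicit computation the paper carries out.
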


This theorem differs from that in loc. cit. because, in this paper, we use the standard conventions for indexing the Hodge filtration of filtered left $\cD$-modules and we use $\pi^!(-)[-r]$ instead of $(-) \boxtimes \Q^H_{\A^r_y}[r]$, which differs by a Tate twist.

\begin{rmk} The hypersurface $g = \sum_{i=1}^r y_i f_i$ is used in geometric representation theory for a technique called ``dimensional reduction". See \cite{Davison} and the references therein.

In fact, the isomorphism of Theorem \ref{thm-Microlocalization} is studied in \cite{Schefers} for perverse sheaves. We do not know how to compare the isomorphism we obtain with the one that they have, though we suspect they give the same isomorphism. This would then enhance the isomorphism of Theorem \ref{thm-Microlocalization} to an isomorphism of $\Q$-mixed Hodge modules. 

It would be very nice to find a ``motivic" incarnation of this isomorphism, though we cannot see how that might be done at the moment.
\end{rmk}

In the application to Radon transforms, we will need a related result which holds at the level of mixed Hodge modules. The idea is to apply the construction of Theorem \ref{thm-Microlocalization} to the localization exact triangle
\[ \sigma_* \sigma^! \pi^! \to \pi^!\to j_* j^* \pi^! \xrightarrow[]{+1},\]
where $\sigma\colon X \to X\times \A^r_y$ is the inclusion of the zero section and $j$ is the inclusion of the complement of the zero section. By applying $\sigma^! \phi_{g,1}$ to this triangle, we get a triangle
\[ {\rm id}\to \sigma^! \phi_{g,1} \pi^! \to \sigma^! \phi_{g,1} \pi^! \xrightarrow[]{+1}.\]

Motivated by Theorem \ref{thm-Microlocalization}, the central term should be isomorphic, up to a shift, to $\sigma^! {\rm FL}{\rm Sp}_Z(-)$. By Remark \ref{rmk-FLRestriction} below, this is isomorphic to $i^*{\rm Sp}_Z(-)[-r]$, where $i\colon X \to X\times \A^r_z$ is the inclusion of the zero section. By \cite{SaitoMHM}*{Pg. 269}, this is naturally isomorphic to $i_{Z*} i_Z^*(-)[-r]$. With a little argument, this line of thought leads to the following:

\begin{thm} \label{thm-MHMLocalization} There is a natural isomorphism of triangles
\[ \sigma^! \phi_{g,1} \sigma_* \sigma^! \pi^! \to \sigma^! \phi_{g,1} \pi^!\to \sigma^! \phi_{g,1} j_* j^* \pi^!\xrightarrow[]{+1}\]
\[ {\rm id} \to i_{Z*}i_Z^* \to j_{Z!} j_Z^{*}[1] \xrightarrow[]{+1}.\]

Dually, there is a natural isomorphism of triangles
\[ \sigma^* \phi_{g,1} j_! j^* \pi^* \to \sigma^* \phi_{g,1} \pi^* \to \sigma^* \phi_{g,1} \sigma_* \sigma^* \pi^* \xrightarrow[]{+1}\]
\[ j_{Z*}j_Z^*[-1] \to i_{Z*} i_Z^!\to {\rm id}\xrightarrow[]{+1}.\]
\end{thm}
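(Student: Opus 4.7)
The plan is to apply $\sigma^!\phi_{g,1}$ termwise to the standard localization triangle
\[
\sigma_*\sigma^!\pi^! \to \pi^! \to j_*j^*\pi^! \xrightarrow{+1}
\]
on $X\times\A^r_y$. This produces the upper row of the first statement tautologically, so the task reduces to identifying each term with the corresponding functor in the lower row and verifying that the connecting maps agree.

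For the middle term, Theorem~\ref{thm-Microlocalization} identifies $\phi_g\pi^!(-)[-r]$ with $\mu_Z={\rm FL}\circ{\rm Sp}_Z$; passing to the unipotent summand and applying $\sigma^!$, Remark~\ref{rmk-FLRestriction} identifies $\sigma^!{\rm FL}(-)$ with $i^*{\rm Sp}_Z(-)[-r]$, where $i\colon X\to X\times\A^r_z$ is the zero section. The formula in \cite{SaitoMHM}*{Pg.\ 269} then gives $i^*{\rm Sp}_Z(M)\cong i_{Z*}i_Z^*(M)$. Combining, $\sigma^!\phi_{g,1}\pi^!(M)\cong i_{Z*}i_Z^*(M)$, naturally in $M$.

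For the first term, $\pi\sigma=\mathrm{id}_X$ gives $\sigma^!\pi^!=\mathrm{id}$, and $\sigma^!\sigma_*=\mathrm{id}$ for a closed embedding, so the computation reduces to evaluating $\phi_{g,1}$ on $\sigma_*(M)$. Since $g\equiv 0$ on $\sigma(X)$, $\sigma_*(M)$ is supported in $\{g=0\}$; a direct calculation---working first over $X\setminus Z$, where some $f_i$ is a unit and $g$ may be completed to a coordinate system, and extending globally---yields $\phi_{g,1}\sigma_*(M)\cong\sigma_*(M)$, and consequently $\sigma^!\phi_{g,1}\sigma_*\sigma^!\pi^!(M)\cong M$.

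The main obstacle is to verify that, under the identifications above, the image of the counit $\sigma_*\sigma^!\pi^!\to\pi^!$ under $\sigma^!\phi_{g,1}$ coincides with the standard adjunction unit $\mathrm{id}\to i_{Z*}i_Z^*$. This is a compatibility check that tracks the counit through the isomorphisms provided by Theorem~\ref{thm-Microlocalization} and Remark~\ref{rmk-FLRestriction} and compares with the adjunction arising from the localization triangle for $i_Z\colon Z\hookrightarrow X$. Granting this, the cone of the unit is precisely $j_{Z!}j_Z^*(M)[1]$, completing the first statement. The dual triangle then follows by applying Verdier duality $\bD$ to the whole first triangle, using $\bD\pi^!=\pi^*\bD$, $\bD\sigma^!=\sigma^*\bD$, $\bD\sigma_*=\sigma_*\bD$, $\bD j_*=j_!\bD$, and the compatibility of $\phi_{g,1}$ with $\bD$ up to a Tate twist; under these identifications the localization triangle $\mathrm{id}\to i_{Z*}i_Z^*\to j_{Z!}j_Z^*[1]\xrightarrow{+1}$ dualizes exactly to $j_{Z*}j_Z^*[-1]\to i_{Z*}i_Z^!\to\mathrm{id}\xrightarrow{+1}$.
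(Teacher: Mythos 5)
Your outline follows the heuristic sketched in the introduction, but it has two genuine gaps, and the second one is precisely what the paper's proof is designed to overcome. First, you identify the middle term $\sigma^!\phi_{g,1}\pi^!$ with $i_{Z*}i_Z^*$ by invoking Theorem \ref{thm-Microlocalization}. That theorem is only an isomorphism of bi-filtered $\cD$-modules: the map $\varphi$ is constructed by hand on the $\cD$-module side, and (as the remark following the theorem stresses) it is not known to be compatible with the $\Q$-structures, so it does not produce an isomorphism in $D^b({\rm MHM})$. A bi-filtered $\cD$-module isomorphism between the objects underlying two mixed Hodge modules does not imply the modules are isomorphic as mixed Hodge modules, so your identification of the middle term is not available at the level at which Theorem \ref{thm-MHMLocalization} is stated. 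The paper avoids this entirely: it works inside the six-functor formalism of ${\rm MHM}$, importing from \cite{Davison}*{Thm. A.1} only a \emph{vanishing} statement ($\pi_!\widetilde{i}_{Z*}\widetilde{i}_Z^*\psi_{g,1}\pi^!=0$), which does transfer from constructible complexes to mixed Hodge modules by faithfulness of ${\rm rat}$; combined with the support estimate of Lemma \ref{lem-supportVanCycles} and the comparison triangle $\psi_{g,1}\to\phi_{g,1}\to i_{g*}i_g^*\xrightarrow{+1}$, this identifies $\pi_!\phi_{g,1}\pi^!$ with $i_{Z*}i_Z^*$ (here $\sigma^!$ is replaced by $\pi_!$ via Remark \ref{rmk-MonodromicPushforward}).

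Second, you correctly flag that the crux is matching the map $\sigma^!\phi_{g,1}\sigma_*\sigma^!\pi^!\to\sigma^!\phi_{g,1}\pi^!$ with the unit ${\rm id}\to i_{Z*}i_Z^*$, but you then write ``granting this'' and move on; that compatibility is the actual content of the theorem, not a routine check, and it is especially delicate on your route because the identification of the middle term would pass through the non-functorial, $\cD$-module-level map $\varphi$. The paper's proof devotes its main commutative diagram to exactly this point: both morphisms are exhibited as images of adjunction units for $(i_g^*,i_{g*})$, $(\sigma_*,\sigma^!)$ and $(\widetilde{i}_Z^*,\widetilde{i}_{Z*})$, which reduces the first arrow to $\pi_!\pi^!$ applied to ${\rm id}\to i_{Z*}i_Z^*$. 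Even then, an isomorphism of the first arrows of two triangles does not by itself give a \emph{natural} isomorphism of triangles (cones are not functorial); the paper needs Lemma \ref{lem-localizationTriangles}, which uses $i^!j_*=0$ to show the induced map on cones is unique. Your proposal omits any such uniqueness argument. The duality deduction of the second triangle is fine in outline and matches the paper.
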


\begin{rmk} In view of Remark \ref{rmk-MonodromicPushforward}, the result above is related to \cite{Davison}*{Thm. A.1}. Davison shows that for constructible complexes, there is a natural isomorphism 
\[ \pi_! \phi_{g,1} \pi ^! \cong \pi_! \pi^! i_{Z*}i_Z^*,\]
where the right hand side is isomorphic to $i_{Z*} i_Z^*$.

In fact, our proof of Theorem \ref{thm-MHMLocalization} uses this result, noting that by Remark \ref{rmk-MonodromicPushforward} we can replace $\sigma^!$ with $\pi_!$ and $\sigma^*$ with $\pi_*$.
\end{rmk}

The final main result concerns the Radon transform for $\cD$-modules, whose definition (originally due to Brylinski \cite{Brylinski}) we recall here, mostly following the notation of \cite{ReicheltRadon}. Let $\bV$ be an $(n+1)$-dimensional affine space with $\bV^0$ the complement of the origin. Let $\P = \P(\bV)$ be the projective space of $\bV$, with the canonical projection $\pi\colon \bV^0 \to \P$. Consider the dual space $\bhV$ and for $k \in \{1,\dots ,n\}$, let $\bhW = \bhV^k$ be the $k$-fold product of the dual space. Points of this space give subspaces of $\bV$ of codimension less than or equal to $k$. Let $r\colon \bS(k,n) \hookrightarrow \bhW$ be the open subset corresponding to subspaces of codimension equal to $k$.

The \emph{Grassmannian} of $(n-k)$-planes in $\P$ is a quotient of $\bS(k,n)$ by a natural ${\rm GL}_k(\C)$-action. We denote it by $\mathbf G$ with projection $\pi\colon \bS(k,n) \to \mathbf G$. We consider the incidence variety $i_Z\colon Z \hookrightarrow \P \times \mathbf G$ defined by
\[ \{ (\pi(v), [\lambda_1,\dots, \lambda_k]) \mid \lambda_1(v) = \dots = \lambda_k(v) = 0\},\]
with complement $j_C\colon C \to \P \times \mathbf G$. We have the diagrams
\[ \begin{tikzcd} & C \ar[dr,"\pi_1^C"] \ar[dl, swap,"\pi_2^C"] \ar[d] & \\ \P & \P \times \mathbf G \ar[r,"\pi_2"] \ar[l,swap,"\pi_1"] & \mathbf G \\ & Z \ar[ur,swap,"\pi_1^Z"] \ar[ul,"\pi_2^Z"] \ar[u] & \end{tikzcd}, \quad  \begin{tikzcd} & C' \ar[dr,"\pi_1^{C'}"] \ar[dl, swap,"\pi_2^{C'}"] \ar[d] & \\ \P & \P \times \bhW \ar[r,"\rho_W"] \ar[l,swap,"\overline{p}"] & \bhW \\ & Z' \ar[ur,swap,"\pi_1^{Z'}"] \ar[ul,"\pi_2^{Z'}"] \ar[u] & \end{tikzcd}\]
where $Z'$ is similarly an incidence variety. We can define various Radon transform triangles from these diagrams, see the statement of Theorem \ref{thm-Radon} below.

The ``affine'' Radon transforms (those coming from the diagram with $\bhW$) can be rewritten as Fourier-Laplace transforms via the following construction: consider $\bV^0 \times_{\P} \bV^0 \times_{\P} \dots \times_{\P} \bV^0$, which is the $k$-fold fiber product of $\bV^0$ over $\P$. An element of this space is a tuple $(v_1,\dots, v_k)$ of non-zero vectors which all lie on the same line in $\bV$. This has a morphism
\[ \bV^0 \times_{\P} \dots \times_{\P} \bV^0 \to \P \times \bV^k, \quad (v_1,\dots, v_k) \mapsto (\pi(v_1), v_1,\dots, v_k),\]
and we denote by $\widetilde{\bV}$ the closure of the image. As a variety over $\P$, one can identify $\widetilde{\bV}$ with the total space of the vector bundle $\bigoplus_{i=1}^k \cO_{\P}(-1)$. We have the diagram
\[ \begin{tikzcd} \widetilde{\bV} \ar[d,"\widetilde \pi"] \ar[r,"\widetilde{\iota}"] \ar[rd,"\widetilde{\jmath}"] & \P \times \bV^k \ar[dl,dotted] \ar[d,"\rho_V"] \\ \P & \bV^k\end{tikzcd},\]
where we denote the dotted arrow by $\overline{q}$. We let $E = \P \times \{0\} \subseteq \widetilde{\bV}$ be the zero section and $\widetilde{\bV}^\circ$ its complement. We have the diagram
\[ \begin{tikzcd} & E \ar[d] \ar[dr,"\eta"] \ar[dl,swap,"{\rm id}"] & \\ \P & \widetilde{\bV} \ar[r,"\widetilde{\jmath}"] \ar[l,swap,"\widetilde{\pi}"] & \bV^k \\ & \widetilde{\bV}^0 \ar[ru,swap,"\widetilde{\jmath}_\circ"] \ar[lu,"\widetilde{\pi}_\circ"] \ar[u] & \end{tikzcd},\]
where $\eta\colon E \to \bV^k$ is the projection onto the origin.

We can then pull objects on $\P$ to $\widetilde{\bV}$, push them forward to $\bV^k$, and then apply the Fourier-Laplace transform to get objects on $\bhW$. The main theorem is the following analogue of \cite{RadonFourier}*{Thm. 2} in the case $k=1$ and \cite{ReicheltGKZ}*{Sect. 2.3} in the case $k > 1$. 

\begin{thm} \label{thm-Radon} In the notation above, there is a natural isomorphism between exact triangles of functors from $D^b({\rm MHM}(\P))$ to $D^b({\rm MHM}(\mathbf S(k,n)))$:
\[ r^*({\rm FL} \eta_* \to {\rm FL} \widetilde{\jmath}_* \widetilde{\pi}^! \to {\rm FL}\widetilde{\jmath}_{\circ *} \widetilde{\pi}^!_\circ \xrightarrow[]{+1})\]
\[ \pi^*(\pi_{2*} \pi_1^* \to \pi_{2*}^Z \pi_1^{Z*} \to \pi_{2!}^{C} \pi_1^{C*}[1]  \xrightarrow[]{+1} )(k(n+1))[k(n+1)].\]

Dually, there is a natural isomorphism between exact triangles:
\[ r^*({\rm FL}\widetilde{\jmath}_{\circ !} \widetilde{\pi}^*_\circ \to {\rm FL} \widetilde{\jmath}_* \widetilde{\pi}^* \to {\rm FL} \eta_*  \xrightarrow[]{+1})\]
\[ \pi^!( \pi_{2*}^C \pi_{1}^{C!}[-1] \to \pi_{2!}^Z \pi_1^{Z!} \to \pi_{2!} \pi_1^! \xrightarrow[]{+1})[-k(n+1)].\]
\end{thm}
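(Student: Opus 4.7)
I would prove the iso of triangles in three stages: (1) use Corollary \ref{cor-ProperFL} to reinterpret ${\rm FL}$ on $\bV^k$ as a relative Fourier--Laplace transform for the trivial bundle $\P\times\bV^k\to\P$, then unfold its definition; (2) apply Theorem \ref{thm-MHMLocalization} to identify the resulting vanishing-cycles triangle on $\widetilde\bV\times\bhW$ with the localization triangle on $\P\times\bhW$ built from the incidence variety $Z'$; (3) descend to $\mathbf{G}$ by smooth base change along the $\GL_k$-torsor $\pi\colon\bS(k,n)\to\mathbf{G}$. Only the middle vertex requires real work; the outer two and the connecting maps then follow from functoriality of the three-step procedure applied to the localization triangle of $E\subset\widetilde\bV$.

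\textbf{Step 1.} Writing $\widetilde\jmath=\rho_V\circ\widetilde\iota$ and $\widetilde\pi=\overline p\circ\widetilde\iota$ with $\widetilde\iota\colon\widetilde\bV\hookrightarrow\P\times\bV^k$ closed, one has $\widetilde\jmath_*\widetilde\pi^!(M)=\rho_{V*}\bigl(\widetilde\iota_*\widetilde\iota^!\overline p^!M\bigr)$ and analogous expressions for $\eta_*$ and $\widetilde\jmath_{\circ *}\widetilde\pi_\circ^!$, exhibiting all three as proper pushforwards along $\rho_V$ of monodromic MHM on the trivial bundle $\P\times\bV^k\to\P$. Corollary \ref{cor-ProperFL} gives ${\rm FL}(\rho_{V*}(-))=\rho_{W*}{\rm FL}_\P(-)$. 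Unfolding ${\rm FL}_\P(-)=\cH^0\sigma^*\phi_{\tilde g}p^!(-)[-k(n+1)]$, with $\tilde g=\sum y_iz_i$ on $\P\times\bV^k\times\bhW$, and using (i) compatibility $\phi_{\tilde g}\widetilde\iota_{W*}=\widetilde\iota_{W*}\phi_{g_0}$ for the closed embedding $\widetilde\iota_W\colon\widetilde\bV\times\bhW\hookrightarrow\P\times\bV^k\times\bhW$ and (ii) base change $\sigma^*\widetilde\iota_{W*}=\sigma_{E*}\sigma_E^*$, the LHS triangle takes the form
\[ \cH^0\rho_{W*}\sigma_E^*\phi_{g_0}\bigl(\text{loc.\ triangle for }E\subset\widetilde\bV\text{ applied to }\overline{q}{}^!\widetilde\pi^!M\bigr)[-k(n+1)],\]
where $\sigma_E\colon\P\times\bhW\to\widetilde\bV\times\bhW$ is the relative zero section, $\overline{q}\colon\widetilde\bV\times\bhW\to\widetilde\bV$ is the projection, and $g_0=\sum_i\lambda_i(v_i)$ is the intrinsic pairing on $\widetilde\bV\times\bhW$.

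\textbf{Steps 2--3.} Cover $\P$ by Zariski opens $U$ trivializing $\cO_\P(-1)$ via nowhere-vanishing sections $v_U$. Under $\widetilde\bV|_U\cong U\times\A^k$ with fiber coordinates $c_i$, one has $g_0=\sum c_if_i$ with $f_i(p,\lambda)=\lambda_i(v_U(p))$, and the common zero locus of the $f_i$ on $U\times\bhW$ is exactly $Z'\cap(U\times\bhW)$. Theorem \ref{thm-MHMLocalization} (with $X=U\times\bhW$, $r=k$) then gives a local iso of triangles $\sigma_E^!\phi_{g_0,1}(\text{loc.\ triangle})\cong({\rm id}\to i_{Z'*}i_{Z'}^*\to j_{C'!}j_{C'}^*[1]\xrightarrow{+1})(\pi_\P^!M)$ on $U\times\bhW$. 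Two bridges reconcile this with Step 1: (a) since $\overline{q}{}^!\widetilde\pi^!M$ is pulled back from $\P$ it is weight-zero monodromic in the fiber direction of $\widetilde\bV$, so $\phi_{g_0,1}=\phi_{g_0}$ on it; (b) on such monodromic MHM, $\sigma_E^*$ and $\sigma_E^!$ differ by the standard Tate twist $(k)[2k]$ (cf.\ Corollary \ref{cor-lastFunctor} and Remark \ref{rmk-MonodromicPushforward}). By naturality of Theorem \ref{thm-MHMLocalization}, the local isomorphisms glue to a global one on $\P\times\bhW$. Applying $r^*\rho_{W*}$: proper base change for $\rho_W$ and open base change for $r$ place us on $\bS(k,n)$, and the Cartesian square $Z'\cap(\P\times\bS(k,n))=(\id_\P\times\pi)^{-1}(Z)$ together with smooth base change for the $\GL_k$-torsor $\pi$ identifies the pushforward with $\pi^*\pi_{2*}^Z\pi_1^{Z*}(M)$. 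The various Tate twists (the $[-k(n+1)]$ from the FL definition, the $\sigma_E^*$-vs-$\sigma_E^!$ shift, the $\overline{q}{}^!$-vs-$\overline{q}{}^*$ shift, and the $\GL_k$-torsor shift) accumulate to the overall $(k(n+1))[k(n+1)]$ of the statement by a routine calculation. The dual triangle follows by the same argument applied to the dual half of Theorem \ref{thm-MHMLocalization}, or alternatively via Verdier duality.

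\textbf{Main obstacle.} The delicate point is Step 2. Theorem \ref{thm-MHMLocalization} is stated for a global tuple $(f_1,\ldots,f_r)\in\cO_X(X)$, but our $f_i$'s exist only on trivializing opens and transform as $f_i\mapsto c\cdot f_i$ under $v_U\mapsto c\cdot v_U$. Although the intrinsic data $(g_0,Z')$ are coordinate-free, the intermediate specialization and microlocalization constructions in the proof of Theorem \ref{thm-MHMLocalization} a priori depend on the chosen $f_i$; so gluing the local isomorphisms requires upgrading Theorem \ref{thm-MHMLocalization} to the setting where $Z$ is cut out by a section of a rank-$r$ vector bundle (of which our $Z'\subset\P\times\bhW$ is an instance), or equivalently verifying sufficient functoriality of the local isomorphism in $(f_1,\ldots,f_r)$. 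The Tate-twist bookkeeping of Step 3 is then mechanical.
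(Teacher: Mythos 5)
Your overall architecture matches the paper's: reduce to the affine statement by base change along $r$ and $\pi$ (the paper's Lemma \ref{lem-compareGrassmannian}), apply ${\rm FL}\,\widetilde{\jmath}_*$ to the localization triangle of the zero section $E\subseteq\widetilde{\bV}$, and identify the result with the $Z'$-localization triangle on $\P\times\bhW$ via Theorem \ref{thm-MHMLocalization}. But the step you yourself flag as the ``main obstacle'' is the actual content of the proof, and your proposal does not close it. Gluing the local isomorphisms over trivializing opens of $\cO_{\P}(-1)$ is doubly problematic: the isomorphism produced by Theorem \ref{thm-MHMLocalization} a priori depends on the chosen tuple $(f_1,\dots,f_k)$ (it is built through the graph embedding, the $V$-filtration and the microlocalization along those specific functions, and the $f_i$ rescale under change of trivialization), and even granting compatibility on overlaps, isomorphisms of objects in $D^b({\rm MHM})$ do not glue naively over a Zariski cover. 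Neither of your two suggested repairs is carried out, so as written the argument has a genuine gap at its central step.

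The paper closes this gap by a different device: it presents $\widetilde{\bV}$ as a $\mathbf G_m$-quotient $\tau\colon\A^k_\zeta\times\bV^0\to\widetilde{\bV}$, $(\zeta,v)\mapsto[\zeta_1v,\dots,\zeta_kv]$, and pulls the entire comparison back along the smooth surjection with connected fibers $\pi'\colon\bV^0\times\bhW\to\P\times\bhW$. Upstairs the pairing becomes $g_\zeta=\sum_i\zeta_i g^{(i)}$ with $g^{(i)}=\sum_j z_jw_j^{(i)}$ \emph{globally} defined on $\bV^0\times\bhW$ and cutting out the preimage $\cG$ of $Z'$, so Theorem \ref{thm-MHMLocalization} applies verbatim, once, with $X=\bV^0\times\bhW$ and $r=k$; the isomorphism then descends because $(\pi')^!$ is fully faithful (Lemma \ref{lem-smoothFF}). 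This is the idea your proposal is missing. I would also flag your claim (b) that $\sigma_E^*$ and $\sigma_E^!$ differ by $(k)[2k]$ on the relevant objects: since $\phi_{g_0}$ of a fiberwise-monodromic module is supported on the zero section (the argument of Lemma \ref{lem-supportVanCycles}), the two restrictions agree with \emph{no} twist (Corollary \ref{cor-lastFunctor}), and inserting that twist would throw off the Tate-twist bookkeeping you defer as routine.
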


In fact, this follows from a version which directly compares the affine Radon transform with the Fourier-Laplace transform. See Proposition \ref{prop-affineRadon} below for the precise statement.

\begin{rmk} In the case $k =1$, we have $\mathbf G = \P^*$, the dual projective space. In this case, $\widetilde{\bV}$ can be identified with the blow-up of $\bV$ at the origin.
\end{rmk}

\noindent {\bf Outline of Paper.} In Section \ref{sect-background}, we mention briefly the important aspects of the theory of mixed Hodge modules and $V$-filtrations which are needed for the paper. We also recall the definition and properties of the Fourier-Laplace transform of \cite{CD}.

In Section \ref{sect-FL}, we prove Proposition \ref{prop-main} and Lemma \ref{lem-supportVanCycles}. The important input is an approximation of the support of vanishing cycles from \cite{MaximVanCycles}.

In Section \ref{sect-Microlocal}, we prove Theorem \ref{thm-Microlocalization}. The point is that all the arguments from \cite{CDMO}*{Sect. 3} and \cite{D-Microlocal}*{Sect. 3} do not require the underlying $\cD_X$-module to be equal to $\cO_X$. We also keep track of the weight filtration. We end with the localization triangle version, Theorem \ref{thm-MHMLocalization}, which follows quickly from \cite{Davison}*{Thm. A.1}.

In the penultimate Section \ref{sect-Radon}, we prove Theorem \ref{thm-Radon}. The point is to follow the arguments of \cite{RadonFourier}, but we do not work with the convolution as they do (because we do not know how to make sense of their integral kernels in the setting of mixed Hodge modules). Interestingly, Theorem \ref{thm-MHMLocalization} provides precisely the missing piece which is needed for the comparison theorem.

The last section applies the comparison of the hyperplane Radon and Fourier-Laplace transforms (as in the version \ref{prop-affineRadon}) to a comparison between two mixed Hodge module structures on certain GKZ systems. We define the GKZ systems of interest and then prove the comparison, which is an easy consequence of the proposition.

\noindent {\bf Acknowledgements}. The author would like to thank James Hotchkiss for discussions about the Fourier-Laplace transform. He would also like to thank Thomas Reichelt and Uli Walther for asking about the connection with the Radon transform. The author is grateful to Qianyu Chen, Ben Davison, Martin Gallauer, Radu Laza, Lauren\c{t}iu Maxim, Mircea \Mustata, Sebasti\'{a}n Olano, Claude Sabbah and Christian Schnell for useful conversations about ideas appearing in this paper.

\section{Background} \label{sect-background} We set the notation and review basic facts in this section. We will use the theory of $\cD$-modules without a review. For details, one should consult \cite{HTT}.
\subsection{Mixed Hodge modules} On any reduced complex algebraic variety $X$, Morihiko Saito defined \cite{SaitoMHP,SaitoMHM} the category of mixed Hodge modules ${\rm MHM}(X)$, which is essentially an extension of the category of generically defined admissible variations of mixed Hodge structures on $X$. For details, one should read Saito's papers or Sabbah-Schnell's mixed Hodge modules project \cite{MHMProj}, and for a nice overview of the theory, one should consult Schnell's survey \cite{Schnell}.

When $X$ is smooth, the data of a Hodge module $M$ on $X$ consists of a bi-filtered regular holonomic $\cD_X$-module $(\cM,F,W)$ where $F_\bullet \cM$ is an increasing good filtration by sub-$\cO_X$-modules (called the \emph{Hodge} filtration) and $W_\bullet \cM$ is a finite increasing filtration by sub-$\cD_X$-modules (called the \emph{weight} filtration), along with a filtered $\Q$-perverse sheaf $(\cK,W)$ and a filtered quasi-isomorphism
\[ \alpha\colon (\cK,W)\otimes_{\Q} \C \cong {\rm DR}_X(\cM,W).\]

Such data are subject to a plethora of conditions, defined using the $V$-filtration along hypersurfaces (the basics of which we review in the next subsection).

\begin{rmk} The \emph{trivial Hodge module} on a smooth variety $X$ is denoted $\Q_X^H[\dim X]$, its underlying bi-filtered $\cD_X$-module is $(\cO_X,F,W)$ where ${\rm gr}^F_0 \cO_X = \cO_X$ and ${\rm gr}^W_{\dim X}\cO_X = \cO_X$.
\end{rmk}

A morphism $\Phi\colon M \to N$ of mixed Hodge modules consists of $\Phi_{\cD}\colon (\cM,F,W) \to (\cN,F,W)$ and $\Phi_{\Q}\colon (\cK_M,W) \to (\cK_N,W)$ which are compatible (after complexification) under the Riemann-Hilbert correspondence. The bi-filtered morphism $\Phi_{\cD}$ is automatically bi-strict with respect to $(F_\bullet,W_\bullet)$, hence, the category ${\rm MHM}(X)$ is abelian. 

As in \cite{SaitoMHM}*{(2.17.7)}, there are \emph{Tate twist} functors $M \mapsto M(\ell)$, where the underlying regular holonomic $\cD_X$-module $\cM$ is unchanged, but the Hodge and weight filtrations are shifted:
\[ M(\ell) = (\cM,F_{\bullet - \ell},W_{\bullet + 2\ell}).\]

There is a duality functor:
\[ \mathbf D_X\colon D^b({\rm MHM}(X)) \to D^b({\rm MHM}(X))^{\rm op}\] and given $f\colon X \to Y$ there are functors
\[ f_*,f_!\colon D^b({\rm MHM}(X)) \to D^b({\rm MHM}(Y)),\]
\[ f^!,f^*\colon D^b({\rm MHM}(Y)) \to D^b({\rm MHM}(X)),\]
and which are related in the following way: the functor $f_!$ is left adjoint to $f^!$, the functor $f^*$ is left adjoint to $f_*$, and there are natural isomorphisms
\[ f_! \cong \mathbf D_Y \circ f_* \circ \mathbf D_X,\]
\[ f^* \cong \mathbf D_X \circ f^! \circ \mathbf D_Y.\]

These behave well with composition, in the following way: let $X \xrightarrow[]{f} Y \xrightarrow[]{g} Z$ be two maps between varieties. Then
\[ g_! f_! =(g\circ f)_!, \quad g_* f_* = (g\circ f)_*,\quad  f^! g^! = (g\circ f)^!, \text{ and } f^* g^* = (g\circ f)^*.\]

On underlying $\cD$-modules, these functors agree with the functors defined for complexes with regular holonomic cohomology (see \cite{HTT}*{Chapter 7} for a review of these functors in that case).

\begin{eg} When $X$ is smooth, the trivial Hodge module $\Q_X^H[\dim X]$ is almost self dual: we have an isomorphism (called a \emph{polarization})
\[ \mathbf D_X(\Q_X^H[\dim X]) \cong \Q_X^H[\dim X](\dim X).\]

When $f\colon X \to Y$ is smooth and surjective, the functor $f^*$ is almost self dual. Namely, we have a natural isomorphism
\[ f^!(-) \cong f^*((-)(c))[2c],\]
where $c = \dim X - \dim Y$.

For any $f\colon X\to Y$, there is a natural transformation $f_! \to f_*$. When $f$ is proper, this map is a natural isomorphism.
\end{eg}

\begin{eg}[Saito's Base Change Result] \label{eg-BaseChange} Saito proved a general base change result for these functors \cite{SaitoMHM}*{(4.4.3)}. Namely, if
\[ \begin{tikzcd} W \ar[r,"g' "] \ar[d,"f' "] & X \ar[d,"f"] \\ Z \ar[r,"g"] & Y\end{tikzcd}\]
is a Cartesian diagram, then there are natural isomorphisms
\[ g^! f_* \cong (f')_* (g')^!, \quad g^* f_! \cong (f')_! (g')^*.\]

When $g$ (hence $g'$) is smooth and surjective, then we get Smooth Base Change:
\[g^! f_! \cong (f')_! (g')^! \text{ and } g^* f_* \cong (f')_* (g')^*.\]

When $f$ (hence $f'$) is proper, then we get Proper Base Change:
\[g^! f_! \cong (f')_! (g')^!\text{ and }g^* f_* \cong (f')_* (g')^*.\]
\end{eg}

\subsection{$V$-filtrations} Let $M$ be a mixed Hodge module on $X$ and let $f_1,\dots, f_r \in \cO_X(X)$ be globally defined functions on $X$. If $\Gamma\colon X\to X\times \A^r_t$ is the graph embedding along $f_1,\dots, f_r$, then $\Gamma_*(M)$ is a mixed Hodge module on $X\times \A^r_t$. We can consider the underlying $\cD_{X\times \A^r_t}$-module $\Gamma_+(\cM) = \bigoplus_{\alpha \in \N^r} \cM\de_t^\alpha \delta_f$.

The action by differential operators is given, in local coordinates $x_1,\dots, x_n$ on $X$, by
\[ h(m \de_t^\alpha \delta_f) = hm \de_t^\alpha \delta_f \text{ for all } h \in \cO_X,\]
\[ \de_{x_i}(m \de_t^\alpha \delta_f) = \de_{x_i}(m) \de_t^\alpha \delta_f - \sum_{j=1}^r \de_{x_i}(f_j) m \de_t^{\alpha+e_j} \delta_f,\]
\[ t_i (m \de_t^\alpha \delta_f) = f_i m \de_t^\alpha \delta_f - \alpha_i m \de_t^{\alpha-e_i}\delta_f,\]
\[ \de_{t_i}(m \de_t^\alpha \delta_f) = m \de_t^{\alpha+e_i}\delta_f.\]

On $\cD_{X\times \A^r_t}$ there is a decreasing $\Z$-indexed filtration
\[ V^k \cD_{X\times \A^r_t} = \bigoplus_{|\alpha| - |\beta|\geq k} \cD_X t^\alpha \de_t^\beta,\]
which is multiplicative and which satisfies, for example, $t_i \in V^1\cD_{X\times \A^r_t}, \de_{t_j} \in V^{-1}\cD_{X\times \A^r_t}$ and 
\[ V^0 \cD_{X\times \A^r_t} = \cD_X\langle t_1,\dots, t_r, t_i \de_{t_j} \mid 1\leq i,j\leq r\rangle.\]

The Kashiwara-Malgrange $V$-filtration (here, following Saito, indexed by $\Q$) on $\Gamma_+(\cM)$ along $t_1,\dots, t_r$ is the unique decreasing, exhaustive filtration $(V^\chi \Gamma_+(\cM))_{\chi \in \Q}$ which is discrete\footnote{This means that there is an increasing sequence $\alpha_j \in \Q$ with $\lim_{j\to -\infty} \alpha_j = -\infty$ and $\lim_{j\to \infty} \alpha_j = \infty$, such that $V^\chi \Gamma_+(\cM)$ for $\chi \in (\alpha_j,\alpha_{j+1})$ only depends on $j$.} and left continuous\footnote{Meaning $V^\chi \Gamma_+( \cM )= \bigcap_{\beta < \chi}V^\beta\Gamma_+(\cM)$. In other words, $V^\chi \Gamma_+( \cM)$ is constant for $\chi \in (\alpha_j,\alpha_{j+1}]$.}, satisfying the following:
\begin{enumerate} 
\item For all $\chi \in \Q, k\in \Z$, we have $V^k \cD_{X\times \A^r} \cdot V^\chi \Gamma_+(\cM) \subseteq V^{\chi+k} \Gamma_+(\cM)$, with equality if $\chi \gg 0$ and $k\geq 0$,
\item For all $\chi\in \Q$, $V^\chi \Gamma_+(\cM)$ is $V^0\cD_{X\times \A^r_t}$-coherent,
\item The action of $s+\chi$ on ${\rm gr}_V^\chi(\Gamma_+(\cM))$ is nilpotent, where $s = - \sum_{i=1}^r \de_{t_i} t_i$ and 
\[ {\rm gr}_V^\chi(\Gamma_+(\cM)) = V^\chi \Gamma_+(\cM) / V^{>\chi}\Gamma_+(\cM), \quad V^{>\chi} \Gamma_+(\cM) = \bigcup_{\beta > \chi} V^\beta \Gamma_+(\cM).\]
\end{enumerate}

When $r=1$, the $V$-filtration is related to nearby and vanishing cycles under the Riemann-Hilbert correspondence. For this reason, we write
\[ \psi_{f,\lambda}(\cM) = \psi_{t,\lambda}(\Gamma_+(\cM)) = {\rm gr}_V^{\lambda}(\Gamma_+(\cM)) \text{ for } \lambda \in (0,1],\]
and
\[ \phi_{f,\lambda}(\cM) = \psi_{f,\lambda}(\cM) \text{ for } \lambda \in (0,1),\quad \phi_{f,1}(\cM) = \phi_{t,1}(\Gamma_+(\cM)) = {\rm gr}_V^0 (\Gamma_+(\cM)).\]

These have nilpotent endomorphisms $N = s+\lambda$. Moreover, $t$ and $\de_t$ give maps
\[ {\rm var}\colon\phi_{f,1}(\cM) \xrightarrow[]{t} \psi_{f,1}(\cM),\]
\[ {\rm can} \colon \psi_{f,1}(\cM) \xrightarrow[]{\de_t} \phi_{f,1}(\cM).\]

If $(\cM,F,W)$ underlies a mixed Hodge module on $X$, then when $r=1$, the Hodge filtration on nearby cycles is unshifted and the filtration on unipotent vanishing cycles is shifted. We write this succinctly as
\[ F_p = F_{p+1-\lceil \lambda\rceil} {\rm gr}_V^{\lambda}(\Gamma_+(\cM)).\]

For $r \geq 1$, and for any $\chi \in \Q$, the module ${\rm gr}_V^\chi(\Gamma_+(\cM))$ has its \emph{relative monodromy filtration} with respect to the nilpotent operator $N = s+\chi$ and the filtration $L_\bullet {\rm gr}_V^\chi(\Gamma_+(\cM)) = {\rm gr}_V^\chi(W_\bullet \Gamma_+(\cM))$ (however, see the convention \ref{eq-relMonoShift} for shifting the filtration $L_\bullet$ when considering nearby cycles below). We denote this filtration by $W_\bullet {\rm gr}_V^\chi(\Gamma_+(\cM))$. It is characterized by the following properties:
\[ N W_\bullet {\rm gr}_V^\chi(\Gamma_+(\cM)) \subseteq W_{\bullet-2}{\rm gr}_V^\chi(\Gamma_+(\cM)),\]
\[ N^\ell\colon {\rm gr}_W^{\bullet+\ell} {\rm gr}_L^\bullet {\rm gr}_V^\chi(\Gamma_+(\cM)) \to  {\rm gr}_W^{\bullet-\ell} {\rm gr}_L^\bullet {\rm gr}_V^\chi(\Gamma_+(\cM))  \text{ is an isomorphism}.\]

Back to the case $r=1$, the weight filtration is the relative monodromy filtration for $L_\bullet {\rm gr}_V^\lambda(\Gamma_+(\cM))$ with the nilpotent operator $N = s+\lambda$. Here, $L$ is defined by
\begin{equation} \label{eq-relMonoShift} L_k {\rm gr}_V^\lambda(\Gamma_+(\cM)) = {\rm gr}_V^\lambda(W_{k+\lceil \lambda \rceil}\Gamma_+(\cM)).\end{equation}

With these definitions, $(\phi_{f,1}(\cM),F,W)$ and $(\bigoplus_{\lambda \in (0,1]} \psi_{f,\lambda}(\cM),F,W)$ underlie mixed Hodge modules $\phi_{f,1}(M)$ and $\psi_f(M)$ on $X$, which are supported on the hypersurface $\{f=0\}$.

\begin{rmk} \label{rmk-restriction} If $i\colon H = \{f=0\} \hookrightarrow X$ is the closed embedding, then by \cite{SaitoMHM}*{Cor. 2.24}, the ${\rm var}$ and ${\rm can}$ maps represent $i_* i^!$ and $i_* i^*$, respectively, in the category of mixed Hodge modules. Namely, we have quasi-isomorphisms
\[ i_* i^! M \cong [ {\rm var}\colon\phi_{f,1}(M) \to \psi_{f,1}(M)(-1)],\]
\[ i_*i^* M[-1] \cong [{\rm can} \colon\psi_{f,1}(M) \to \phi_{f,1}(M)],\]
where the $(-1)$ is a Tate twist.

In terms of the underlying bi-filtered $\cD$-modules, we have
\[ F_p i_* i^! \cM = [ F_{p+1}{\rm gr}_V^0(\Gamma_+(\cM)) \xrightarrow[]{t} F_{p+1} {\rm gr}_V^1 (\Gamma_+(\cM))],\]
\[ W_k i_* i^! \cM = [ W_k {\rm gr}_V^0(\Gamma_+(\cM)) \xrightarrow[]{t} W_{k-2} {\rm gr}_V^1( \Gamma_+(\cM))],\]
\[ F_p i_* i^* \cM = [F_p {\rm gr}_V^1 (\Gamma_+(\cM)) \xrightarrow[]{\de_t} F_{p+1} {\rm gr}_V^0 (\Gamma_+(\cM))],\]
\[ W_k i_* i^* \cM = [W_k {\rm gr}_V^1 (\Gamma_+(\cM)) \xrightarrow[]{\de_t} W_k{\rm gr}_V^0 (\Gamma_+(\cM))].\]

For related formulas when $r>1$, see \cite{CD}*{Thm. 1.2} and \cite{CDS}*{Thm. 2}.
\end{rmk}

\subsection{Monodromic Modules and Fourier-Laplace Transform} \label{sect-FL} Let $E = X\times \A^r_z$ and consider a coherent monodromic $\cD_E$-module $\cM$. Here monodromic means that $\cM = \bigoplus_{\chi \in \Q} \cM^\chi$, where $\cM^\chi = \bigcup_{j\geq 1} \ker((\theta_z - \chi +r)^j)$ and $\theta_z = \sum_{i=1}^r z_i\de_{z_i}$. It is easy to see that if $V^\bullet \cM$ is the $V$-filtration along $z_1,\dots, z_r$, then
\[ V^\chi \cM = \bigoplus_{\lambda \geq \chi} \cM^\lambda,\]
and so there is a natural isomorphism $\cM^\chi \cong {\rm gr}_V^\chi(\cM)$.

The Fourier-Laplace (or Fourier-Sato) transform of $\cM$ is the $\cD_{E^\vee}$-module ${\rm FL}(\cM)$ which has the same underlying $\cD_X$-module structure, but for which
\[ y_i \cdot m = - \de_{z_i}(m) \text{ and } \de_{y_i} \cdot m = z_i (m).\]

It is easy to see that ${\rm FL}(\cM)$ is monodromic. Indeed,
\[ (\theta_y - \chi +r)(m) = (-\sum_{i=1}^r \de_{z_i}z_i - \chi + r)(m) = - (\theta_z - (r-\chi) +r)(m),\]
and so in fact, we have
\[ {\rm FL}(\cM)^\chi = \cM^{r-\chi}.\]

In \cite{CD}, the author and Chen study the Fourier-Laplace transform for mixed Hodge modules whose underlying $\cD$-module is monodromic (see \cite{MonoMHM1} for an approach using gluing). These are called \emph{monodromic mixed Hodge modules}. Some of the key results concerning monodromic mixed Hodge modules are the following:
\begin{thm}[\cite{CD}*{Thm. 1.5}] \label{thm-MonoMHM} Let $(\cM,F,W)$ underlie a monodromic mixed Hodge module on $E$. Then
\[ F_p \cM = \bigoplus_{\chi \in \Q} F_p \cM^\chi\]
and if $N = \bigoplus_{\chi \in \Q} (\theta_z - \chi + r)$ is the nilpotent operator, then
\[ N W_\bullet \cM \subseteq W_{\bullet-2} \cM.\]
\end{thm}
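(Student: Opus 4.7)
The plan is to deduce both statements from Saito's strictness axiom for the $V$-filtration along $z_1, \dots, z_r$, exploiting the canonical splitting $V^\chi \cM = V^{>\chi} \cM \oplus \cM^\chi$ available in the monodromic setting. For monodromic $\cM$ the identification $V^\chi \cM = \bigoplus_{\lambda \geq \chi} \cM^\lambda$ makes the composition $\cM^\chi \hookrightarrow V^\chi \cM \twoheadrightarrow \operatorname{gr}_V^\chi \cM$ a canonical isomorphism, via which I would define $F_p \cM^\chi$ (resp.\ $W_k \cM^\chi$) as the transported Hodge (resp.\ weight) filtration. By the strictness axiom, $F_p V^\chi \cM = F_p \cM \cap V^\chi \cM$ and $F_p \operatorname{gr}_V^\chi \cM = F_p V^\chi \cM / F_p V^{>\chi} \cM$, and analogously for $W_\bullet$.

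To prove $F_p \cM = \bigoplus_\chi F_p \cM^\chi$, I would argue by induction on the (finite) support of elements of $F_p \cM$. Given $m \in F_p \cM$ with support $\{\chi_1 < \cdots < \chi_n\}$, one has $m \in F_p V^{\chi_1} \cM$ and its class in $\operatorname{gr}_V^{\chi_1} \cM$ equals $m_{\chi_1}$, so strictness places $m_{\chi_1}$ in $F_p \cM^{\chi_1}$ (as transported). The crucial step is to conclude that $m_{\chi_1}$, viewed back inside $\cM$ via the canonical splitting, actually lies in $F_p \cM$: one clean way to package this is the isomorphism of monodromic mixed Hodge modules $\cM \cong \operatorname{Sp}_z(\cM)$, whose underlying filtered $\cD$-module is manifestly a direct sum over $\chi$. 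The difference $m - m_{\chi_1}$ then has strictly smaller support and remains in $F_p \cM$, so the induction closes; the identical argument for $W$ gives $W_\bullet \cM = \bigoplus_\chi W_\bullet \cM^\chi$.

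With the decomposition of $W_\bullet \cM$ in hand, the second assertion follows immediately: on each summand $\cM^\chi = \operatorname{gr}_V^\chi \cM$ the operator $N = \theta_z - \chi + r$ is the nilpotent monodromy operator, and $W_\bullet \cM^\chi$ is by construction its relative monodromy filtration with respect to $L_\bullet \cM^\chi = \operatorname{gr}_V^\chi(W_\bullet \cM)$; the defining characterization reviewed in the Background gives $N W_\bullet \cM^\chi \subseteq W_{\bullet - 2} \cM^\chi$, which sums over $\chi$ to yield $N W_\bullet \cM \subseteq W_{\bullet - 2} \cM$. The main obstacle throughout is the compatibility of the canonical splitting with the Hodge filtration: although strictness produces the induced filtration on $\operatorname{gr}_V^\chi \cM$, lifting it back to a direct summand of $F_p \cM$ relies on a careful analysis, either via the specialization identification or by a direct inductive argument leveraging that $\theta_z \in V^0 \cD_E$.
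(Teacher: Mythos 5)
First, a structural point: the paper does not prove this theorem at all --- it is imported verbatim from \cite{CD}*{Thm. 1.5} as background --- so there is no internal proof to compare against, and your argument has to stand on its own.

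It does not, and the gap sits exactly where you flag it. Granting strictness of $F_\bullet$ along the $V$-filtration (which for $r>1$ is itself a nontrivial theorem of \cite{CD}/\cite{CDS}, not one of Saito's axioms, so it cannot be invoked as an axiom for the multivariable $V$-filtration along $z_1,\dots,z_r$), what you actually get for $m\in F_p\cM$ with lowest monodromic weight $\chi_1$ is that its class in $\mathrm{gr}_V^{\chi_1}\cM$ lies in $F_p\,\mathrm{gr}_V^{\chi_1}\cM=(F_p\cM\cap V^{\chi_1}\cM+V^{>\chi_1}\cM)/V^{>\chi_1}\cM$. Pulling back along the splitting, this only says $m_{\chi_1}\in F_pV^{\chi_1}\cM+V^{>\chi_1}\cM$; it does not place $m_{\chi_1}$ itself in $F_p\cM$, which is what your induction on supports requires. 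The statement you need --- that $F_p\cM\cap\cM^{\chi}$ surjects onto $F_p\,\mathrm{gr}_V^{\chi}\cM$, i.e.\ that the eigenspace splitting is compatible with $F$ --- is precisely the theorem. Your proposed repair, the isomorphism $\cM\cong{\rm Sp}_z(\cM)$ of monodromic mixed Hodge modules with $F_p{\rm Sp}_z(\cM)=\bigoplus_\chi F_p\,\mathrm{gr}_V^\chi(\cM)$ ``manifestly'' a direct sum, is circular: asserting that this isomorphism is \emph{filtered} is equivalent to $F_p\cM=\bigoplus_\chi F_p\cM^\chi$.

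A secondary instance of the same circularity appears in your treatment of $W$. The decomposition $W_k\cM=\bigoplus_\chi(W_k\cM\cap\cM^\chi)$ needs no strictness at all --- $W_k\cM$ is a $\cD$-submodule, hence $\theta_z$-stable, hence decomposes into generalized eigenspaces --- but the claim that $W_\bullet\cM\cap\cM^\chi$ agrees (up to the shift in \eqref{eq-relMonoShift}) with the relative monodromy filtration on $\mathrm{gr}_V^\chi\cM$ is not true ``by construction''; it is part of the package being established, and without it the inclusion $NW_\bullet\subseteq W_{\bullet-2}$ does not follow. To close the Hodge-filtration gap one needs quantitative input relating $F_\bullet$ to the $z_i$- and $\partial_{z_i}$-actions (e.g.\ the generation/surjectivity axioms $z\cdot F_pV^\chi=F_pV^{\chi+1}$ and $\partial_z F_p\,\mathrm{gr}_V^\chi=F_{p+1}\mathrm{gr}_V^{\chi-1}$ in the appropriate ranges, combined with the fact that on a monodromic module these operators move eigenspaces isomorphically away from finitely many $\chi$); some argument of this kind, as in \cite{CD}, is what your induction is missing.
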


\begin{rmk} \label{rmk-restrictionMono} Let $(\cM,F,W)$ be a bi-filtered $\cD_E$-module underlying a monodromic mixed Hodge module. Above, we mentioned that there is an isomorphism $\cM^\chi \cong {\rm gr}_V^\chi(\cM)$. By Theorem \ref{thm-MonoMHM}, this is a filtered isomorphism, meaning
\[ F_p \cM^\chi = F_p\cM \cap \cM^\chi \cong F_p {\rm gr}_V^\chi(\cM),\]
\[ W_k \cM^\chi = W_k \cM \cap \cM^\chi \cong W_k {\rm gr}_V^\chi(\cM),\]
where the last term is the relative monodromy filtration of $L_\bullet {\rm gr}_V^\chi(\cM) = {\rm gr}_V^\chi(W_\bullet \cM)$ with respect to $N = \theta - \chi +r$.

When $r=1$, this allows us to re-write the restriction functors in the monodromic setting as follows. We remark that the shifts in the weight filtration are to make up for the shift of the filtration $L$ for nearby cycles.
\[ F_p i^! \cM = [ F_{p+1}\cM^0 \xrightarrow[]{z} F_{p+1}\cM^1],\]
\[ W_k  i^! \cM = [ W_k \cM^0 \xrightarrow[]{z} W_{k-1} \cM^1],\]
\[ F_p i^* \cM = [F_p \cM^1 \xrightarrow[]{\de_z} F_{p+1}\cM^0],\]
\[ W_k i^* \cM = [W_{k+1}\cM^1 \xrightarrow[]{\de_z} W_k\cM^0].\]
\end{rmk}

\begin{rmk} \label{rmk-MonodromicPushforward} If $M$ is a monodromic mixed Hodge module on $E$ with zero section $\sigma\colon X \to E$ and projection $p\colon E \to X$, then there are natural isomorphisms
\[ \sigma^* M \cong p_* M, \quad \sigma^! M \cong p_! M,\]
see, for example, \cite{CD}*{Rem. 3.8} (which follows immediately from \cite{Ginzburg}*{Prop. 10.4}).
\end{rmk}

To study the Fourier-Laplace transform for mixed Hodge modules, one introduces the product variety $\cE = E \times_X E^\vee$ with its projections $p\colon \cE \to E$ and $q\colon\cE \to E^\vee$. As a variety, $\cE = X\times \A^r_z \times \A^r_y$ and it admits a globally defined function $g = \sum_{i=1}^r z_i y_i$, which is the pairing on the fibers of $E$ and $E^\vee$.

One of the main results of \cite{CD} is an explicit isomorphism of $\cD_{E^\vee}$-modules
\begin{equation} \label{eq-FourierIso} {\rm FL}(\cM) \cong \cH^0 \sigma^* \phi_g p^!(\cM)[-r],\end{equation}
where, as $\cD$-modules, $p^!(\cM)[-r] = \cM\boxtimes \cO_{\A^r_y}$. Given a monodromic module $\cM = \bigoplus_{\chi \in \Q} \cM^\chi$, we write $\cM^{\chi + \Z} = \bigoplus_{\ell \in \Z} \cM^{\chi+\ell}$ for the direct summand $\cD$-module. It is easy to see then that $\cM = \bigoplus_{\lambda \in [0,1)} \cM^{\lambda + \Z}$. 

Under the Isomorphism \ref{eq-FourierIso}, the Fourier-Laplace transform inherits a Hodge and weight filtration, because the object on the right hand side underlies a mixed Hodge module. These filtrations are expressed by the following theorem:
\begin{thm}[\cite{CD}*{Thm. 1.4}] \label{thm-FourierHodgeWeight} Let $(\cM,F,W)$ be a bi-filtered $\cD_E$-module underlying a monodromic mixed Hodge module $M$ on $E$. Then, under the isomorphism \ref{eq-FourierIso}, we have
\[ F_p {\rm FL}(\cM)^{r-\chi} = F_{p-\lceil \chi\rceil} \cM^\chi\]
\[ W_k{\rm FL}(\cM)^{\lambda+\Z} = {\rm FL}(W_{k+r+\lceil \lambda\rceil}\cM)^{\lambda + \Z} \text{ for } \lambda \in [0,1),\]
where $W_{k+r+\lceil \lambda \rceil} \cM$ is a monodromic sub-mixed Hodge module of $\cM$, and so ${\rm FL}(-)$ is defined as above.
\end{thm}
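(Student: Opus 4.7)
The plan is to compute, in explicit terms, the Hodge and weight filtrations that the Fourier-Laplace isomorphism $(\ref{eq-FourierIso})$ induces on ${\rm FL}(\cM)$, by running the composition $\cH^0\sigma^*\phi_g p^!(-)[-r]$ stage by stage and bookkeeping the shifts each operation contributes.

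At the first stage, since $p\colon \cE \to E$ is smooth of relative dimension $r$, the object $p^!(\cM)[-r]$ lies in cohomological degree zero with underlying $\cD_{\cE}$-module $\cM\boxtimes\cO_{\A^r_y}$, and its Hodge and weight filtrations differ from those of $p^*(\cM)$ by the Tate twist $(r)$; moreover, it is monodromic with respect to $z_1,\dots,z_r$ with monodromic pieces $\cM^\chi\boxtimes\cO_{\A^r_y}$. The heart of the argument is pinning down the $V$-filtration along $t$ of $\Gamma_{g+}(p^!\cM[-r])$ on $\cE\times\A^1_t$, where $\Gamma_g$ is the graph embedding along $g=\sum y_iz_i$. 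Exploiting Theorem \ref{thm-MonoMHM}---that $\theta_z=\sum z_i\de_{z_i}$ acts on $\cM^\chi$ as $\chi-r$ plus a nilpotent---together with the fact that $g$ is $\theta_z$-homogeneous of degree one, one verifies that $s=-t\de_t$ agrees with $\theta_z+r$ modulo nilpotents on the summands built from $\cM^\chi$. This produces an explicit candidate $V$-filtration whose defining axioms (coherence, discreteness, left continuity, and nilpotency of $s+\chi$ on graded pieces) can be verified by reducing to known properties of the nilpotent $N=\theta_z-\chi+r$ on $\cM$.

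With the $V$-filtration in hand, the Hodge filtration on $\phi_g p^!(\cM)[-r]$ is read off via $F_p=F_{p+1-\lceil\lambda\rceil}{\rm gr}_V^\lambda$. I would then compute $\cH^0\sigma^*$ iteratively---one coordinate $z_i$ at a time---using Remark \ref{rmk-restrictionMono} for the restriction of a monodromic MHM, noting that after each step the result remains monodromic in the surviving $z$-variables. Collecting the shifts---$-r$ from the Tate twist on $p^!$, $+1-\lceil\chi\rceil$ from the vanishing-cycle normalization, and contributions from the $r$-fold iterated restriction---produces $F_p\,{\rm FL}(\cM)^{r-\chi}=F_{p-\lceil\chi\rceil}\cM^\chi$. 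The weight filtration follows the same scheme, with the additional task of identifying the relative monodromy filtration on each ${\rm gr}_V^\lambda$ with respect to $N=s+\lambda$; combining the shift $L_k={\rm gr}_V^\lambda W_{k+\lceil\lambda\rceil}$ from $(\ref{eq-relMonoShift})$ with the Tate and restriction shifts yields $W_k\,{\rm FL}(\cM)^{\lambda+\Z}={\rm FL}(W_{k+r+\lceil\lambda\rceil}\cM)^{\lambda+\Z}$.

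The main obstacle, as I see it, is justifying that the candidate $V$-filtration coming from the monodromic decomposition of $\cM$ really does coincide with the Kashiwara-Malgrange $V$-filtration along $g$---in particular, controlling how the $y$-variables, which do not participate in the monodromic structure, interact with the graph embedding and the nilpotent parts. Once this step is secured, the rest of the argument amounts to careful tracking of Hodge and weight shifts through the composition.
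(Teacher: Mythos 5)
This statement is not proved in the paper at all: it is imported verbatim from \cite{CD}*{Thm. 1.4} as background, so there is no internal argument to compare yours against. Judged on its own terms, your outline does follow the strategy of the cited source (and of the closely parallel Section \ref{sect-Microlocal} of this paper): realize ${\rm FL}(\cM)$ as $\cH^0\sigma^*\phi_g p^!(\cM)[-r]$, determine the $V$-filtration of $\Gamma_{g+}(\cM[y_1,\dots,y_r])$ along $g=\sum y_iz_i$ from the monodromic decomposition, read off $F$ on $\phi_g$ via $F_p=F_{p+1-\lceil\lambda\rceil}{\rm gr}_V^\lambda$, and then account for the shifts coming from $p^!(-)[-r]$ and from $\sigma^*$ (the latter is best handled by noting, via Lemma \ref{lem-supportVanCycles} and Kashiwara's equivalence, that $\phi_g(\cM[y_1,\dots,y_r])$ is supported on $\sigma(E^\vee)$, rather than by a genuine $r$-fold iterated restriction).

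The gap is that the step you yourself identify as the main obstacle is the entire content of the theorem, and your proposal only announces it. To make this a proof you must (i) write down the candidate $V^\lambda\Gamma_{g+}(\cM[y_1,\dots,y_r])$ explicitly in terms of the pieces $\cM^\chi y^\alpha\de_t^j\delta_g$ and verify all the axioms (coherence over $V^0\cD$, the equality $V^k\cD\cdot V^\chi=V^{\chi+k}$ for $\chi\gg 0$, and nilpotency of $s+\lambda$ on the graded pieces), which is where the interaction of the $y$-variables with the graph embedding actually has to be controlled; and (ii) establish that the candidate filtration is compatible with $F$ and $W$ bistrictly, since otherwise the graded formulas for $F_p\,{\rm gr}_V^\lambda$ and for the relative monodromy filtration cannot be transported to ${\rm FL}(\cM)$. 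Neither of these is a routine reduction ``to known properties of $N$.'' One further caution: you write $s=-t\de_t$, whereas the paper's convention is $s=-\sum_i\de_{t_i}t_i$; the two differ by $1$, and in a computation whose whole point is index bookkeeping modulo $\lceil\chi\rceil$ and $r$, that discrepancy will silently corrupt the final formulas if not fixed at the outset.
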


\begin{rmk}\label{rmk-FLRestriction} Let $i\colon X \to E$ and $\sigma\colon X \to E^\vee$ be the two zero sections of the trivial line bundles $E$ and $E^\vee$. From these relations, we see that ${\rm FL}$ interchanges the two types of restriction to zero sections. We will write out the relationship when $r=1$, but a similar relationship holds for $r>1$. See Remark \ref{rmk-FLRestrictionFunctorial} below for another argument.

Let $M$ be a monodromic Hodge module on $E$ with underlying bi-filtered $\cD_E$-module $(\cM,F,W)$. Consider ${\rm FL}(M)$, its Fourier-Laplace transform, which is a monodromic Hodge module on $E^\vee$. By Remark \ref{rmk-restrictionMono} above, we see that
\[ F_p  \sigma^* {\rm FL}(\cM) = [ F_p {\rm FL}(\cM)^1 \xrightarrow[]{\de_y} F_{p+1} {\rm FL}(\cM)^0]\]
\[ W_k  \sigma^*{\rm FL}(\cM) = [ W_{k+1} {\rm FL}(\cM)^1 \xrightarrow[]{\de_y} W_k {\rm FL}(\cM)^0].\]

But by Theorem \ref{thm-FourierHodgeWeight}, this is the same thing as
\[ [F_p \cM^0 \xrightarrow[]{z} F_p\cM^1] = F_{p-1}  i^!(\cM)[1],\]
\[ [ W_{k+2}\cM^ 0 \xrightarrow[]{z} W_{k+1} \cM^1] = W_{k+2} i^!(\cM)[1]\]

So we see the relation $ \sigma^* \circ {\rm FL} \cong i^! (1)[1]$. Similarly, one can check that $\sigma^! \circ {\rm FL} \cong i^*[-1]$.
\end{rmk}

\begin{rmk} \label{rmk-FLZeroConstant} At the $\cD$-module level, it is clear that modules supported on the zero section $X\times \{0\} \subseteq E$ correspond under the Fourier-Laplace transform with ``constant" $\cD$-modules, i.e., modules for which every element is $\de_{y_1},\dots, \de_{y_r}$-torsion. It is not hard to see that such modules are of the form $\cN \boxtimes \cO_{\A^r_y}$ for some $\cD_X$-module $\cN$.

Of course, this is also true at the Hodge module level. Indeed, let $i\colon X \to E$ be the inclusion of the zero section. We have a Cartesian diagram
\[ \begin{tikzcd} X \times \{0\} \times \A^r_y \ar[r,"I"] \ar[d,"P"] & X\times  \A^r_z \times \A^r_y\ar[d,"p"]\\ X\times \{0\} \ar[r,"i"] & X\times \A^r_z \end{tikzcd}.\]

Then
\[ {\rm FL}(i_* N) = q_* \phi_g p^! i_* N [-r] \cong q_* \phi_g I_* P^! N[-r],\]
where the last isomorphism holds by Smooth Base Change \ref{eg-BaseChange}. Now, $X\times \{0\} \times \A^r_y \subseteq \{g=0\}$, and so $\phi_g I_* = I_*$. Hence, we get
\[ {\rm FL}(i_* N) = (q\circ I)_* P^! N[-r],\]
and $q\circ I$ is the identity $X\times \{0\} \times \A^r_y \to X\times \A^r_y$. We see then that
\[ {\rm FL}(i_* N) \cong \pi^! N [-r],\]
where $\pi\colon X\times \A^r_y \to X$ is the projection.
\end{rmk}

\section{Comments on the Fourier-Laplace Transform} \label{sect-FL}
This section is devoted to a proof of Proposition \ref{prop-main}. To do this, we first prove Lemma \ref{lem-supportVanCycles}.

These results only concern the underlying $\cD$-modules, and so we can replace $p^!(\cM)[-r]$ with $\cM[y_1,\dots,y_r]$. Recall that we have an isomorphism
\[ {\rm FL}(\cM) \cong \cH^0 \sigma^* \phi_g(\cM[y_1,\dots, y_r]),\]

The better behaved object would be
\[ \sigma^* \phi_g(\cM[y_1,\dots, y_r]),\]
which, a priori, is an object in the derived category. 

The first result of this section is the following:
\begin{prop} \label{prop-main} Let $\cM$ be a monodromic regular holonomic $\cD_E$-module. Then
\[ \cH^i \sigma^* \phi_g(\cM[y_1,\dots, y_r]) = 0 \text{ for all } i \neq 0.\]
\end{prop}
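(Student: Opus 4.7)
The proof will proceed by locating the support of $\phi_g(\cN)$, where $\cN := \cM[y_1,\ldots,y_r] = p^!\cM[-r]$ as a $\cD$-module, and then invoking Kashiwara's equivalence for the closed embedding $\sigma \colon E^\vee \to \cE$.

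The key input is Lemma \ref{lem-supportVanCycles}, which (following \cite{MaximVanCycles}) bounds $\Supp\phi_g(\cN)$ in terms of the characteristic variety: a point $(x,z,y) \in \cE$ can lie in the support of $\phi_g(\cN)$ only if $g(x,z,y)=0$ and $dg(x,z,y)$ belongs to the fiber of $CC(\cN)$ over $(x,z,y)$. Since $\cN = \cM \boxtimes \cO_{\A^r_y}$, we have $CC(\cN) = CC(\cM) \times T^*_{\A^r_y}\A^r_y \subseteq T^*\cE$, so the $T^*\A^r_y$-component of any vector in $CC(\cN)$ is zero. Now $dg = \sum_{i=1}^r y_i\,dz_i + \sum_{i=1}^r z_i\,dy_i$ has $dy_i$-coefficient equal to $z_i$, so the condition $dg \in CC(\cN)$ forces $z_1 = \cdots = z_r = 0$. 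Hence $\Supp \phi_g(\cN) \subseteq \sigma(E^\vee)$.

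Given this support bound, Kashiwara's equivalence produces a unique regular holonomic $\cD_{E^\vee}$-module $\cP$ with $\phi_g(\cN) \cong \sigma_+\cP$. A standard computation---writing $\sigma^* = \mathbf D_{E^\vee}\circ \sigma^!\circ \mathbf D_\cE$, using Kashiwara's identity $\sigma^!\sigma_+ = \mathrm{id}$ together with the compatibility of $\sigma_+$ with Verdier duality---shows that $\sigma^*\sigma_+\cP \cong \cP$ is concentrated in cohomological degree $0$. This yields $\cH^i \sigma^*\phi_g(\cN) = 0$ for all $i \neq 0$, proving the proposition.

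The main work is thus the support estimate; once extracted from Lemma \ref{lem-supportVanCycles}, the conclusion is formal. The monodromicity hypothesis on $\cM$ does not appear explicitly in the argument above (the essential input is the triviality of $\cN$ in the $y$-direction), so I expect it to be required either in the statement or proof of Lemma \ref{lem-supportVanCycles} itself, or downstream to identify $\cH^0 \sigma^*\phi_g(\cN)$ with ${\rm FL}(\cM)$ as in the isomorphism~\eqref{eq-FourierIso}.
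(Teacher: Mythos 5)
Your proposal is correct and follows essentially the same route as the paper: establish $\Supp\phi_g(\cM[y_1,\dots,y_r])\subseteq\sigma(E^\vee)$ and then conclude by Kashiwara's equivalence via $\sigma^*\sigma_+ = \mathrm{id}$. The only cosmetic difference is that you phrase the support bound via the characteristic variety of $\cM\boxtimes\cO_{\A^r_y}$, whereas the paper applies the stratified version from \cite{MaximVanCycles} (singular loci of $g$ restricted to strata $S_j\times\A^r_y$); both reduce to the observation that $\partial_{y_i}g = z_i$, and your closing remark that monodromicity is not needed for this step is also consistent with the paper.
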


In fact, this is a direct consequence of the following result (see also \cite{Davison}*{Lem A.4})
\begin{lem} \label{lem-supportVanCycles}With the notation above, the module $\phi_g(\cM[y_1,\dots, y_r])$ is supported on $\sigma(E^\vee) = X\times \{0\} \times \A^r_y$.
\end{lem}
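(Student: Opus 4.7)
The plan is to reduce the claim to the support bound for vanishing cycles established in \cite{MaximVanCycles}, which controls $\Supp(\phi_g \cN)$ via the interaction of $\mathrm{Char}(\cN)$ with $dg$. The key point is that for $\cN = \cM[y_1, \ldots, y_r] = p^!(\cM)[-r]$, a smooth pullback along the projection $p\colon \cE \to E$, the characteristic variety has trivial cotangent component in the $y$-direction, while $dg$ has non-zero $y$-cotangent component $z$ away from $\{z = 0\}$. These two facts are incompatible on the locus $\{z \neq 0\}$, which would force the support of $\phi_g(\cM[y_1,\ldots,y_r])$ to lie in $\sigma(E^\vee) = \{z = 0\}$.

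Concretely, since $p$ is smooth,
\[ \mathrm{Char}(\cM[y_1, \ldots, y_r]) = \{ ((x, z, y), (\xi_x, \xi_z, 0)) \in T^*\cE \mid (x, z, \xi_x, \xi_z) \in \mathrm{Char}(\cM) \}, \]
so every covector in the characteristic variety has vanishing $\xi_y$-component. A direct computation gives $dg = \sum_i y_i\, dz_i + \sum_i z_i\, dy_i$, so the $\xi_y$-component of $dg$ at a point $(x_0, z_0, y_0)$ is precisely $z_0$.

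I would then invoke the support bound in the form that $\Supp(\phi_g \cN)$ is contained in the set of $p \in \{g = 0\}$ for which $(p, \lambda dg_p) \in \mathrm{Char}(\cN)$ for some $\lambda \in \C^*$. Any point $(x_0, z_0, y_0)$ in the support with $z_0 \neq 0$ would then yield $\lambda \neq 0$ with $\lambda z_0 = 0$ (from reading off the $\xi_y$-component), which is a contradiction.

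The main obstacle is stating Maxim's bound in precisely the form needed for (regular holonomic) $\cD$-modules, rather than for constructible complexes; this translation is essentially the classical ``non-characteristic'' criterion and is routine via Riemann--Hilbert, but warrants a careful citation. A more hands-on alternative avoiding this issue is available: on each open set $\{z_i \neq 0\}$, change coordinates to replace $y_i$ with $t = g$, so that $\cM[y_1, \ldots, y_r]$ acquires, as an $\cO$-module with its $\partial_t$-action, a polynomial-in-$t$ structure $\cN[t]$ on which $\partial_t$ is the standard derivation. A direct computation then produces a filtration satisfying the axioms of the $V$-filtration with $V^\chi = \cN[t]$ for all $\chi \leq 1$, so $\mathrm{gr}_V^0 = 0$ on this open set, and a covering argument over $i = 1, \ldots, r$ concludes.
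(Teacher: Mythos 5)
Your primary argument is essentially the paper's own proof in dual language: the paper invokes the same support estimate from \cite{MaximVanCycles}*{Prop. 4.20}, takes a product Whitney stratification $\{S_j\times \A^r_y\}$ adapted to $\cM[y_1,\dots,y_r]$, and observes that $\de_{y_i}(g)=z_i$ forces $\Sing_{S_j\times\A^r_y}(g\vert_{S_j\times\A^r_y})\subseteq \{z=0\}$. Your formulation via $\mathrm{Char}(p^!\cM[-r])$ having vanishing $\xi_y$-component while $dg$ has $\xi_y$-component $z$ is exactly the conormal/covector translation of that computation, so the content is identical; the only cost of your phrasing is the translation you yourself flag (stating the bound for regular holonomic $\cD$-modules rather than constructible complexes), which Riemann--Hilbert handles and which the paper sidesteps by staying in the stratification picture. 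Your alternative argument is genuinely different and worth noting: on each chart $\{z_i\neq 0\}$ the function $g$ is part of a coordinate system with $p$ still the projection, so $\cM[y_1,\dots,y_r]$ is an external product $\cN\boxtimes\cO_{\A^1_t}$ in the $t=g$ direction, the $V$-filtration is explicitly $V^\chi=\cN[t]$ for $\chi\le 1$ (and $t^{\lceil\chi\rceil-1}\cN[t]$ for $\chi>1$), and hence ${\rm gr}_V^\chi=0$ for all $\chi<1$, killing all of $\phi_g$ there, not just the unipotent part --- you should say this since $\phi_g=\bigoplus_{\lambda\in[0,1)}{\rm gr}_V^\lambda$. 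This second route is self-contained (no external support estimate, no Whitney stratifications) at the price of verifying the $V$-filtration axioms by hand, and it stays entirely on the $\cD$-module side, which fits the spirit of this section.
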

\begin{proof} To approximate the support of $\phi_g(-)$, using \cite[Prop. 4.20]{MaximVanCycles} we must find a Whitney stratification of $\cE$ so that $\cM[y_1,\dots, y_r]$ is locally constant along that Whitney stratification. Clearly, if $\{S_j\}$ is a Whitney stratification for $E$ so that $\cM$ is locally constant along that stratification, then $\{S_j \times \A^r_y\}$ will be one such stratification for $\cE$.

The formula for the support of the vanishing cycles is 
\[ {\rm Supp}(\phi_g(\cM[y_1,\dots,y_r])) \subseteq \bigcup_j {\rm Sing}_{S_j \times \A^r_y}(g\vert_{S_j \times \A^r_y}).\]

Note that, after choosing coordinates $s_1,\dots, s_{d_j}$ on $S_j$, we get coordinates $s_1,\dots, s_{d_j},y_1,\dots, y_r$ on $S_j \times \A^r_y$. Let $g_j = g\vert_{S_j \times \A^r_y}$. Then the singular locus is defined by the vanishing of the partial derivatives, i.e.,
\[ \de_{s_1}(g_j),\dots, \de_{s_{d_j}}(g_j), \de_{y_1}(g_j),\dots, \de_{y_r}(g_j),\]
but the last few such partial derivatives are clearly $z_1,\dots, z_r$. Hence, the Jacobian ideal of each $g_j$ contains the ideal $(z_1,\dots, z_r)$, and so we see that
\[ {\rm Sing}_{S_j\times \A^r_y}(g_j) \subseteq X\times \{0\} \times \A^r_y,\]
which proves the claim.
\end{proof}

\begin{proof}[Proof of Proposition \ref{prop-main}] If $\phi_g(\cM[y_1,\dots, y_r])$ is supported on $\sigma(E^\vee)$, it can be written
\[ \phi_g(\cM[y_1,\dots,y_r]) = \sigma_+ \cN,\]
for some module $\cN$ on $E^\vee$. But then $\sigma^* \sigma_+ \cN = \cN$, and so the desired vanishing is clear.
\end{proof}

The result on the support of the vanishing cycles in this case also leads to the following, which should be compared with \cite{KashShap}*{Prop. 10.1.13}.

\begin{cor} \label{cor-lastFunctor} For $\cM$ a monodromic regular holonomic $\cD_E$-module, we have
\[ {\rm FL}_X(\cM) \cong \sigma^! \phi_g(\cM[y_1,\dots, y_r]),\]
and in particular, 
\[ \cH^i \sigma^! \phi_g(\cM[y_1,\dots, y_r]) = 0 \text{ for all } i \neq 0.\]
\end{cor}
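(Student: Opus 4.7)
The plan is to bootstrap everything from Lemma \ref{lem-supportVanCycles} via Kashiwara's equivalence. Since $\phi_g(\cM[y_1,\dots,y_r])$ is supported on $\sigma(E^\vee)$ by that lemma, Kashiwara's equivalence for regular holonomic $\cD$-modules along the closed embedding $\sigma\colon E^\vee \to \cE$ produces a regular holonomic $\cD_{E^\vee}$-module $\cN$ with
\[ \phi_g(\cM[y_1,\dots,y_r]) \cong \sigma_+\cN. \]
The same equivalence ensures that both derived functors $\sigma^*$ and $\sigma^!$, when restricted to the full subcategory of $\cD_\cE$-modules supported on $\sigma(E^\vee)$, are concentrated in a single cohomological degree and recover $\cN$; that is, $\sigma^!\sigma_+\cN \cong \cN$ (and similarly for $\sigma^*$). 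In particular, $\sigma^!\phi_g(\cM[y_1,\dots,y_r]) \cong \cN$ sits in degree $0$, which is already the cohomology vanishing statement.

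It remains to identify $\cN$ with ${\rm FL}(\cM)$. For this I would invoke Proposition \ref{prop-main} (just proved) together with the defining isomorphism of the Fourier-Laplace transform: we have
\[ {\rm FL}(\cM) \cong \cH^0\sigma^*\phi_g(\cM[y_1,\dots,y_r]) \cong \cH^0\sigma^*\sigma_+\cN \cong \cN, \]
the last step again by Kashiwara. Stringing the isomorphisms together yields $\sigma^!\phi_g(\cM[y_1,\dots,y_r]) \cong \cN \cong {\rm FL}(\cM)$, which is the asserted isomorphism.

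Since the argument is so short, there is essentially no main obstacle — Lemma \ref{lem-supportVanCycles} does all the work. The one point to watch is bookkeeping: the derived functors $\sigma^*$ and $\sigma^!$ of a closed embedding of codimension $r$ do \emph{not} agree in general (they differ by shifts and Tate twists), and this is reflected in Remarks \ref{rmk-restrictionMono} and \ref{rmk-FLRestriction}. What rescues us here is that on modules supported on $\sigma(E^\vee)$ both functors are t-exact and both serve as quasi-inverses to $\sigma_+$, so no shift intervenes. Thus the interchange $\sigma^* \leftrightsquigarrow \sigma^!$ in the formula for ${\rm FL}$ is a genuine feature of the vanishing-cycles side, parallel to the analogous phenomenon for the Fourier-Sato transform of constructible complexes recorded in \cite{KashShap}*{Prop. 10.1.13}.
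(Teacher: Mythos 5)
Your argument is correct and is essentially the paper's own (largely implicit) proof: Lemma \ref{lem-supportVanCycles} plus Kashiwara's equivalence writes $\phi_g(\cM[y_1,\dots,y_r])=\sigma_+\cN$, both $\sigma^*$ and $\sigma^!$ recover $\cN$ in degree $0$ on modules supported on the zero section, and the defining isomorphism for ${\rm FL}$ identifies $\cN$ with ${\rm FL}(\cM)$. Your remark that no shift or twist intervenes precisely because the module is supported on $\sigma(E^\vee)$ is exactly the point that makes the interchange of $\sigma^*$ and $\sigma^!$ legitimate.
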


Thanks to monodromicity of $\phi_g(\cM[y_1,\dots, y_r])$ (which ensures $\sigma^* \cong q_*$ and $\sigma^! \cong q_!$, see Remark \ref{rmk-MonodromicPushforward}), the left-most functor in defining the Fourier-Laplace transform can actually be one of four: namely, we have isomorphisms
\[ {\rm FL}_X(\cM) \cong \sigma^*\phi_g(\cM[y_1,\dots, y_r]) \cong q_*\phi_g(\cM[y_1,\dots, y_r])\] \[\cong \sigma^!\phi_g(\cM[y_1,\dots, y_r]) \cong q_!\phi_g(\cM[y_1,\dots, y_r]).\]

We see easily that the Fourier-Laplace transform commutes with proper push-forwards at the $\cD$-module level, and the following result shows that this is true at the level of monodromic mixed Hodge modules, too.

\begin{cor} \label{cor-ProperFL} Let $f\colon X\to Y$ be a proper morphism and $F = f \times {\rm id}_{\A^r}\colon X\times \A^r_z \to Y \times \A^r_z$. Let $M$ be a monodromic mixed Hodge module on $X\times \A^r_z$. Then $\cH^i F_*(M)$ is monodromic for all $i \in \Z$.

Moreover, if ${\rm FL}_X$, ${\rm FL}_Y$ are the respective Fourier-Laplace transforms for monodromic modules on $X \times \A^r_z$ and $Y\times \A^r_z$, and if $\hat{F} = f\times {\rm id}_{\A^r}\colon X \times \A^r_y \to Y\times \A^r_y$, then
\[ \hat{F}_* {\rm FL}_X(M) \cong {\rm FL}_Y F_*(M)\]
in the derived category of monodromic mixed Hodge modules on $Y\times \A^r_y$.
\end{cor}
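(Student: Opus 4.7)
The plan is in two steps. First, I would verify that $\cH^i F_*(M)$ is monodromic, which ensures both sides of the desired isomorphism lie in the derived category of monodromic MHMs. Second, I would use the formula \eqref{eq-FourierIso} for ${\rm FL}$ and chain together the appropriate base-change results with the compatibility of vanishing cycles with proper push-forward.

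For monodromicity: Theorem \ref{thm-MonoMHM} yields a direct sum decomposition $M = \bigoplus_{\chi} M|_{\cM^\chi}$ already at the bi-filtered $\cD$-module level, with $\theta_z - \chi + r$ nilpotent on each summand. Since $F = f \times {\rm id}_{\A^r_z}$ respects the projection to $\A^r_z$, the operator $\theta_z$ commutes with $F_*$, which preserves direct sums. Passing to cohomology produces an analogous decomposition of $\cH^i F_*(M)$, with $\theta_z - \chi + r$ still nilpotent on each piece, whence monodromicity.

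For the isomorphism, set $\cF = f \times {\rm id}_{\A^r_z \times \A^r_y} \colon \cE_X \to \cE_Y$, which is proper. The two squares
\[ \begin{tikzcd} \cE_X \ar[r,"\cF"] \ar[d,"p_X"'] & \cE_Y \ar[d,"p_Y"] \\ X\times \A^r_z \ar[r,"F"'] & Y\times \A^r_z\end{tikzcd} \qquad \begin{tikzcd} X\times \A^r_y \ar[r,"\sigma_X"] \ar[d,"\hat{F}"'] & \cE_X \ar[d,"\cF"] \\ Y \times \A^r_y \ar[r,"\sigma_Y"'] & \cE_Y\end{tikzcd}\]
are Cartesian, and the function $g_X = \sum_i z_i y_i$ on $\cE_X$ agrees with $g_Y \circ \cF$. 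The plan is then to string these facts together via \eqref{eq-FourierIso} (using Proposition \ref{prop-main} to stay inside the abelian category in the first step):
\begin{align*}
{\rm FL}_Y(F_* M) &\cong \sigma_Y^* \phi_{g_Y} p_Y^!(F_* M)[-r] \cong \sigma_Y^* \phi_{g_Y} \cF_* p_X^!(M)[-r] \\
&\cong \sigma_Y^* \cF_* \phi_{g_X} p_X^!(M)[-r] \cong \hat{F}_* \sigma_X^* \phi_{g_X} p_X^!(M)[-r] \cong \hat{F}_* {\rm FL}_X(M),
\end{align*}
where the successive isomorphisms use base change along the left square, the commutation of $\phi$ with proper push-forward when the function pulls back, and proper base change along the right square.

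The hard part will be the third step, $\phi_{g_Y} \cF_* \cong \cF_* \phi_{g_X}$: this is the standard compatibility of vanishing cycles with proper push-forward, but I would need it at the bi-filtered MHM level rather than merely for underlying $\cD$-modules or perverse sheaves, hence it must be invoked from Saito's work. Everything else is formal from the base-change results recalled in Example \ref{eg-BaseChange}.
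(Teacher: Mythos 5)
Your proposal is correct and follows essentially the same route as the paper: the paper's proof is exactly your chain of isomorphisms (written in the opposite direction), using proper/smooth base change on the two Cartesian squares together with Saito's compatibility of vanishing cycles with proper push-forward (\cite{SaitoMHM}*{Thm.\ 2.14}) applied to $g_X = g_Y \circ \cF$. Your explicit argument for the monodromicity of $\cH^i F_*(M)$ is a welcome addition, as the paper asserts this without detail.
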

\begin{proof} We have a Cartesian diagram
\[ \begin{tikzcd} X \times \A_y^n \ar[r,"\sigma_X"] \ar[d,"\hat{F}"] & X\times \A_z^r\times \A_y^r \ar[d,"P"] \\ Y\times \A_y^r \ar[r,"\sigma_Y"] & Y\times \A_z^r\times \A_y^r \end{tikzcd}\]
hence, by Proper Base Change as in Example \ref{eg-BaseChange}, using that $P_* = P_!$, we get
\[ \hat{F}_* \circ \sigma_X^* \cong \sigma_Y^* \circ  P_*.\]

Now, we have that $g_X\colon X\times \A^r_z \times \A^r_y \to \A^1$ is actually equal to $g_X = g_Y\circ P$, by definition. So by proper push-forward compatibility with vanishing cycles \cite[Thm. 2.14]{SaitoMHM}, we get
\[P_* \circ \phi_{g_X} \cong \phi_{g_Y} \circ P_*.\]

Finally, we have a Cartesian diagram
\[ \begin{tikzcd} X\times \A_z^r \times \A_y^r \ar[r,"q_X"] \ar[d,"P"] & X\times \A_z^r \ar[d,"F"] \\ Y \times \A_z^r \times \A_y^r \ar[r,"q_Y"] & Y \times \A_z^r \end{tikzcd},\]
and so $P_*\circ q_X^! \cong q_Y^!\circ F_*$, again by Proper Base Change \ref{eg-BaseChange}.

Putting this together, we have
\[ \widehat{F}_* {\rm FL}_X = \widehat{F}_* \sigma_X^* \phi_{g_X} q_X^! \cong \sigma_Y^* P_* \phi_{g_X} q_X^! \cong \sigma_Y^* \phi_{g_Y} P_* q_X^! \cong \sigma_Y^* \phi_{g_Y} q_Y^! F_* = {\rm FL}_Y F_*.\]
\end{proof}

If $M$ is a \emph{unipotent} monodromic module, i.e., $\cM = \cM^{\Z}$, then it is easy to see that ${\rm FL}(M)$ is also unipotent. In particular, $\sigma^* \phi_{g,\lambda} p^!(M)[-r] = 0$ for all $\lambda \neq 1$. Hence, for such modules we can consider the \emph{unipotent Fourier-Laplace transform} ${\rm FL}_{\rm unip} = \sigma^* \phi_{g,1} p^! [-r]$. 

Recall that we have defined in \cite{CD} the inverse Fourier transform functor $\overline{\rm FL}$, which is simply a Tate-twisted version of ${\rm FL}$, though the twist is different on the unipotent and non-unipotent parts. In the unipotent case, the \emph{Fourier-Inversion formula} \cite{CD}*{Cor. 1.6} gives
\begin{equation} \label{eq-FLInv}  {\rm FL}_{\rm unip} {\rm FL}_{\rm unip} (-) \cong a_*(-)(r), \end{equation}
where $a\colon X \times \A^r_z \to X\times \A^r_z$ is the antipode involution $(x,z) \mapsto (x,-z)$. Moreover, we have
\[ \mathbf D_{E^\vee} \circ {\rm FL}_{\rm unip} \cong ({\rm FL}_{\rm unip} \circ \mathbf D_E)(-r).\]

\begin{rmk}\label{rmk-FLZeroConstant2} We saw in Remark \ref{rmk-FLZeroConstant} that if $i\colon X \to X\times \A^r_z$ is the inclusion of the zero section, there is a natural isomorphism
\[ {\rm FL}(i_* N) \cong {\rm FL}_{\rm unip} (i_*N) \cong \pi^! N[-r],\]
where $\pi\colon X\times \A^r_y \to X$ is the projection.

By Fourier Inversion, we get
\[ {\rm FL}_{\rm unip}(\pi^! N[-r]) \cong a_* i_*N (r) = i_* N (r),\]
using the fact that $a \circ \sigma = \sigma$. As $\pi^! N[-r]$ is isomorphic to $\pi^* N(r)[r]$, we get
\[ {\rm FL}_{\rm unip}(\pi^* N[r]) \cong i_* N.\]
\end{rmk}

\begin{rmk} \label{rmk-FLRestrictionFunctorial} Let $\sigma\colon X \to X\times \A^r_y$ and $i\colon X\to X\times \A^r_z$ be the inclusions of the zero sections.

Using Fourier inversion, Remark \ref{rmk-MonodromicPushforward} and Remark \ref{rmk-FLZeroConstant2}, it is easy to check that 
\[ \sigma^* \circ {\rm FL}(-r)[-r] \text{ is right adjoint to } i_*\]
and
\[ \sigma^! \circ {\rm FL}[r] \text{ is left adjoint to } i_*.\]

This gives isomorphisms
\[ \sigma^* \circ {\rm FL}(-r)[-r] \cong i^!, \quad \sigma^! \circ {\rm FL}[r] \cong i^*.\]

Note that $\sigma^* \circ {\rm FL} = \sigma^* \circ {\rm FL}_{\rm unip}$ and similarly for $\sigma^!$, as the other monodromic pieces play no role in the restriction formula.

We will check the former adjunction, to see why the Tate twist arises. The latter also follows from this by duality.

We have by Remark \ref{rmk-MonodromicPushforward} the isomorphism
\[ {\rm Hom}(N, \sigma^* {\rm FL}_{\rm unip}(M)(-r)[-r]) \cong {\rm Hom}(N,\pi_* {\rm FL}_{\rm unip}(M)(-r)[-r]) \cong {\rm Hom}(\pi^* N[r], {\rm FL}_{\rm unip}(M)(-r)),\]
which, by applying the equivalence $a_* {\rm FL}_{\rm unip}$ to both arguments, gives
\[ {\rm Hom}(a_* {\rm FL}_{\rm unip}(\pi^* N[r]), M) \cong {\rm Hom}(i_* N, M),\]
where the last isomorphism follows from Remark \ref{rmk-FLZeroConstant2}.
\end{rmk}

\section{Microlocalization for mixed Hodge modules} \label{sect-Microlocal} This section is devoted to a proof of Theorem \ref{thm-Microlocalization}. We recall the set-up from the introduction. Let $Z \subseteq X$ be defined by $f_1,\dots, f_r \in \cO_X(X)$ and let $\Gamma\colon X \to X\times \A^r_t$ be the graph embedding along $f_1,\dots, f_r$, with coordinates $t_1,\dots,t_r$ on the $\A^r_t$ term. We let $X\times \A^r_z$ be the normal bundle of the zero section of $X\times \A^r_t$. Let $X\times \A^r_y$ be the dual bundle to $X \times \A^r_z$.

Let $M$ be a mixed Hodge module on $X$ with $(\cM,F,W)$ its underlying bi-filtered left $\cD_X$-module. Then $\Gamma_* M$ has underlying left $\cD_X$-module $\Gamma_+(\cM) = \bigoplus_{\alpha \in \N^r} \cM \de_t^\alpha \delta_f$, with weight filtration
\[ W_\bullet \Gamma_+(\cM) = \bigoplus_{\alpha \in \N^r} (W_\bullet \cM)\de_t^\alpha \delta_f,\]
Hodge filtration
\[ F_\bullet \Gamma_+(\cM) = \bigoplus_{\alpha \in \N^r} F_{\bullet - |\alpha| - r}\cM \de_t^\alpha \delta_f,\]
and with $\Q$-indexed $V$-filtration along $t_1,\dots,t_r$, denoted $V^\bullet\Gamma_+(\cM)$.

The Verdier specialization of $\Gamma_* M$ is a monodromic mixed Hodge module ${\rm Sp}_Z(M) = {\rm Sp}(\Gamma_* M)$ on $X\times \A^r_z$. For details of this construction, see \cite{BMS}*{Sect. 1.3} and \cite{CD}*{Sect. 2.4}. Its underlying filtered $\cD$-module is
\[ {\rm Sp}_Z(\cM) = \bigoplus_{\chi \in \Q} {\rm gr}_V^\chi(\cM),\]
\[ F_p {\rm Sp}_Z(\cM) = \bigoplus_{\chi \in \Q} F_p {\rm gr}_V^\chi(\cM).\]

The weight filtration on ${\rm Sp}_Z(\cM)$ is given by
\[ W_\bullet {\rm Sp}_Z(\cM) = \bigoplus_{\chi \in \Q} W_\bullet {\rm gr}_V^\chi(\cM),\]
where $W_\bullet {\rm gr}_V^{\chi}(\cM)$ is the relative monodromy filtration along the nilpotent operator $N = s+\chi$ as defined above.

\begin{rmk} \label{rmk-sameRestriction} As argued in \cite{SaitoMHM}*{Pg. 269}, if $i\colon Z \subseteq X$ is the inclusion and $\sigma\colon X \to X\times \A^r_z$ is the zero section, then there are natural isomorphisms
\[ \sigma_*\sigma^!{\rm Sp}_Z(\cM) \cong i_* i^! \cM, \quad \sigma_* \sigma^* {\rm Sp}_Z(\cM) \cong i_* i^* \cM,\]
which are also easy (at least when $r=1$) to see using the formulas for restriction given in Example \ref{rmk-restriction} above.
\end{rmk}

Finally, we want to compute $\mu_Z(M) = {\rm FL} {\rm Sp}_Z(M)$, and so we need to take the Fourier-Laplace transform. As stated in Theorem \ref{thm-FourierHodgeWeight}, the Hodge and weight filtrations are given by
\[ F_\bullet {\rm FL} {\rm Sp}_Z(\cM)^{r-\chi} = F_{\bullet-\lceil \chi\rceil} {\rm gr}_V^{\chi}(\cM).\]
\[ W_\bullet {\rm FL}{\rm Sp}_Z(\cM)^{\lambda + \Z} = {\rm FL}_X(W_{\bullet + r+ \lceil \lambda\rceil} {\rm Sp}_Z(\cM))^{\lambda + \Z}.\]

On the other hand, if $\pi \colon X\times \A^r_y \to X$ is the projection, we can consider $\pi^!(M)[-r]$, which has underlying $\cD$-module $\cM[y_1,\dots, y_r]$. The Hodge filtration is defined by
\[ F_\bullet (\cM[y_1,\dots, y_r]) = (F_{\bullet-r} \cM)[y_1,\dots, y_r],\]
and the weight filtration is given by
\[ W_\bullet(\cM[y_1,\dots,y_r]) = (W_{\bullet +r} \cM)[y_1,\dots,y_r].\]

Let $g = \sum_{i=1}^r y_i f_i \in \cO_{X\times \A^r_y}(X\times \A^r_y)$ and consider the graph embedding $\gamma\colon X \times \A^r_y \to X\times \A^r_y \times \A^1_\xi$ along $g$. We have $\gamma_+(\cM[y_1,\dots, y_r]) = \cM[y_1,\dots, y_r,\de_\xi]\delta_g$. We also consider the \emph{partial microlocalization} $\cM_g =\gamma_+(\cM[y_1,\dots, y_r])[\de_\xi^{-1}] = \cM[y_1,\dots, y_r,\de_\xi^{\pm 1}]\delta_g$, as in \cite{ThomSebastianiHodge}.

The $\cD$-action is given for all $m\in \cM$, $\alpha \in \N^r, j \in \Z$ by
\[ \de_{x_i}(m y^\alpha \de_{\xi}^j\delta_g) = \de_{x_i}(m)y^\alpha \de_{\xi}^j \delta_g- \left(\sum_{\ell=1}^r \de_{x_i}(f_\ell) m y^{\alpha+e_\ell}\right) \de_{\xi}^{j+1}\delta_g \text{ for all } 1\leq i \leq \dim X,\]
\[ \de_{y_i}(m y^\alpha \de_{\xi}^j\delta_g) = \alpha_i m y^{\alpha-e_i} \de_{\xi}^j\delta_g - f_j m y^\alpha \de_\xi^{j+1}\delta_g \text{ for all } 1\leq i\leq r,\]
\[ \de_{\xi}(m y^\alpha \de_{\xi}^j\delta_g) = m y^\alpha \de_{\xi}^{j+1}\delta_g,\]
\[ \xi(m y^\alpha \de_{\xi}^j\delta_g) = g m y^\alpha \de_\xi^j\delta_g - j m y^\alpha \de_\xi^{j-1}\delta_g.\]

The Hodge filtration is given by
\[ F_\bullet \cM_g= \bigoplus_{j \in \Z} (F_{\bullet-j-r-1} \cM)[y_1,\dots, y_r] \de_\xi^j \delta_g,\]
and the weight filtration is
\[ W_\bullet \cM_g = (W_{\bullet+r}\cM)[y_1,\dots, y_r,\de_{\xi}^{\pm 1}] \delta_g.\]

Let $s = -\de_{\xi} \xi$. Note that, for $\theta_y = \sum_{i=1}^r y_i \de_{y_i}$, we have
\[ \theta_y( m y^\alpha \de_\xi^j \delta_g) = |\alpha| m y^\alpha\de_\xi^j\delta_g - g m y^\alpha \de_\xi^{j+1}\delta_g = (|\alpha| + s-j)(m y^\alpha \de_\xi^j \delta_g).\]

Hence, elements of the form $m y^\alpha \de_\xi^j \delta_g$ are eigenvectors of the operator $\theta_y - s$, with eigenvalue $|\alpha|-j$. We can write
\[ \cM_g = \bigoplus_{\ell \in \Z} E^{(\ell)},\]
where
\[ E^{(\ell)} = \ker(\theta_y - s - \ell) = \bigoplus_{|\alpha| = j+\ell} \cM y^\alpha \de_\xi^j \delta_g.\]

Let $V^\bullet \cM_g$ be the microlocal $V$-filtration along $\xi$, defined in \cite{ThomSebastianiHodge}*{Sect. 1.1} (see \cite{SaitoMicrolocal} for the case $\cM = \cO_X$). 

By \cite{ThomSebastianiHodge}*{(1.1.8)}, we have that
\[ \de_\xi^j(F_p V^\lambda \cM_g) = F_{p+j} V^{\lambda-j} \cM_g\]
and by \cite{ThomSebastianiHodge}*{(1.1.9)} that the inclusion $\cM[y_1,\dots, y_r,\de_\xi]\delta_g \subseteq \cM_g$ induces an isomorphism for all $\lambda <1$ and all $p\in \Z$,
\begin{equation} \label{eq-MicrolocalSameGr} F_p {\rm gr}_V^\lambda(\cM[y_1,\dots, y_r,\de_\xi]\delta_g) \cong F_p {\rm gr}_V^{\lambda}(\cM_g).\end{equation}

Each $V^\lambda \cM_g$ is stable by $\theta_y - s$, so each of them decomposes into eigenspaces. It is clear that the Hodge filtration pieces also decompose into eigenspaces. Let $V^\lambda E^{(\ell)} = E^{(\ell)}\cap V^\lambda$ and $F_p E^{(\ell)} = E^{(\ell)} \cap F_p$. The maps $\de_\xi^j$ move the eigenspaces in a predictable way, and hence we have
\[ \de_\xi^j(F_p V^\lambda E^{(\ell)}) = F_{p+j} V^{\lambda-j} E^{(\ell-j)}\]
for all $j,p,\ell \in \Z$ and $\lambda \in \Q$.

The morphism we define below differs from that in \cite{CDMO}*{Prop. 3.2} and in \cite{D-Microlocal} by the antipodal map on $X\times \A^r_y$ (i.e., interchanging $y_i$ with $-y_i$ and $\de_{y_i}$ with $-\de_{y_i}$). We use the map below because it relates more naturally to the Fourier-Laplace transform.

\begin{prop} \label{prop-varphiProperties} Using the notation above, define $\varphi\colon\cM_g \to \Gamma_+(\cM)$ as the unique $\cO_X$-linear map sending
\[ m y^\alpha \de_\xi^j \delta_g \mapsto (-1)^{|\alpha|+j} m \de_t^\alpha \delta_f\]
for all $m \in \cM, \alpha \in \N^r$ and $j \in \Z$.

Then we have the following:
\begin{enumerate} \item $\varphi$ is $\cD_X$-linear.
\item $\varphi \circ \de_{\xi}^k = (-1)^k \varphi$ for all $k\in \Z$,
\item $\varphi \circ y_i = - \de_{t_i} \circ \varphi$ for all $i \in \{1,\dots, r\}$,
\item $\varphi \circ \de_{y_i} = t_i \circ \varphi$ for all $i\in \{1,\dots, r\}$,
\item $\varphi \circ \theta_y = s \circ \varphi$,
\item For any $\ell \in \Z$, the map $\varphi\vert_{E^{(\ell)}}$ is an isomorphism which satisfies
\[ \varphi \circ s = (s - \ell) \circ \varphi.\]
\end{enumerate}
\end{prop}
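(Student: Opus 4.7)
The plan is to verify each of the six properties by direct computation on the $\cO_X$-free generators $\{m y^\alpha \de_\xi^j \delta_g\}$ of $\cM_g$. Since these form an $\cM$-basis, the defining formula $m y^\alpha \de_\xi^j \delta_g \mapsto (-1)^{|\alpha|+j} m \de_t^\alpha \delta_f$ makes $\varphi$ well-defined as an $\cO_X$-linear map. Properties (1)--(5) then reduce to sign bookkeeping: the twist $(-1)^{|\alpha|+j}$ is chosen precisely so that the structure constants of the two $\cD$-module structures match under $\varphi$.

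For (1), applying $\de_{x_i}$ to $m y^\alpha \de_\xi^j \delta_g$ produces $\de_{x_i}(m) y^\alpha \de_\xi^j \delta_g - \sum_\ell \de_{x_i}(f_\ell) m y^{\alpha+e_\ell}\de_\xi^{j+1}\delta_g$. Under $\varphi$, the second summand picks up the sign $(-1)^{|\alpha|+j+2} = (-1)^{|\alpha|+j}$, so both summands factor $(-1)^{|\alpha|+j}$ and reassemble into $\de_{x_i}\varphi(m y^\alpha \de_\xi^j \delta_g)$ via the analogous formula for $\Gamma_+(\cM)$. Property (2) is immediate from the defining formula. Properties (3) and (4) are one-line checks: for (3), $\varphi(y_i \cdot m y^\alpha \de_\xi^j \delta_g) = (-1)^{|\alpha|+j+1} m \de_t^{\alpha+e_i}\delta_f = -\de_{t_i}\varphi(m y^\alpha \de_\xi^j \delta_g)$; for (4), the two terms in $\de_{y_i}(m y^\alpha \de_\xi^j \delta_g) = \alpha_i m y^{\alpha-e_i} \de_\xi^j \delta_g - f_i m y^\alpha \de_\xi^{j+1}\delta_g$ map under $\varphi$ to $(-1)^{|\alpha|+j}\bigl(f_i m \de_t^\alpha \delta_f - \alpha_i m \de_t^{\alpha-e_i}\delta_f\bigr) = t_i \varphi(m y^\alpha \de_\xi^j \delta_g)$. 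Property (5) then follows from (3) and (4) by summation: $\varphi \theta_y = \sum_i \varphi y_i \de_{y_i} = -\sum_i \de_{t_i} \varphi \de_{y_i} = -\sum_i \de_{t_i} t_i \varphi = s \varphi$, using $s = -\sum_i \de_{t_i} t_i$ on $\Gamma_+(\cM)$.

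For (6), observe that $E^{(\ell)}$ has the $\cM$-basis $\{y^\beta \de_\xi^{|\beta|-\ell}\delta_g \mid \beta \in \N^r\}$, indexed by the same set as the $\cM$-basis $\{\de_t^\beta \delta_f\}_{\beta \in \N^r}$ of $\Gamma_+(\cM)$. The map $\varphi$ sends $y^\beta \de_\xi^{|\beta|-\ell}\delta_g$ to $(-1)^{2|\beta|-\ell}\de_t^\beta \delta_f = (-1)^\ell \de_t^\beta \delta_f$, exhibiting an $\cO_X$-linear bijection; combined with (1) this gives a $\cD_X$-linear isomorphism. The intertwining $\varphi \circ s = (s-\ell)\circ \varphi$ on $E^{(\ell)}$ follows from (5) together with the fact that $\theta_y - s$ acts as the scalar $\ell$ on $E^{(\ell)}$, so $\varphi s = \varphi(\theta_y - \ell) = s\varphi - \ell \varphi$. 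The only real obstacle is keeping the signs straight across the various commutation relations; the choice of twist $(-1)^{|\alpha|+j}$ is designed precisely so that $y_i$ corresponds to $-\de_{t_i}$ and $\de_{y_i}$ to $t_i$, aligning $\varphi$ with the Fourier-Laplace transform as foreshadowed in the paragraph preceding the proposition.
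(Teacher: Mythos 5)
Your proof is correct and takes the same route as the paper, which simply notes that the properties follow from direct computation with the two $\cD$-module structures and defers the details to \cite{CDMO}*{Prop.~3.2}; you have carried out exactly those sign-bookkeeping computations, and your derivations of (5) from (3)--(4) and of (6) from (5) together with the identification of the $\cM$-bases of $E^{(\ell)}$ and $\Gamma_+(\cM)$ are all sound.
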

\begin{proof} These properties follow from some simple computations involving the $\cD$-module structures on both sides. The computations are exactly the same as when $M = \Q_X^H[\dim X]$, so we do not repeat them here. See \cite{CDMO}*{Prop. 3.2} for details.
\end{proof}

From this, we see that the map $\varphi$ relates the Hodge, weight and $V$-filtrations on both sides. Here, because we want an isomorphism of $\cD_X$-modules, we restrict to a single eigenspace of the operator $\theta_y - s$.

\begin{prop} \label{prop-microlocalHodgeVW} The map $\varphi$ satisfies the following properties with respect to the $V$, Hodge and weight filtrations:
\[ \varphi(V^{\lambda} E^{(\ell)}) = V^{\lambda-\ell} \Gamma_+(\cM),\]
\[ \varphi(F_p E^{(\ell)}) = F_{p+\ell-1} \Gamma_+(\cM),\]
\[ \varphi(W_k E^{(\ell)}) = W_{k+r} \Gamma_+(\cM).\]
\end{prop}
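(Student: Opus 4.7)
The three claims split naturally: the Hodge and weight filtration identities are direct bookkeeping from the explicit formulas for these filtrations, while the $V$-filtration identity is the substantive one and rests on the uniqueness of the (microlocal) $V$-filtration.

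For Hodge and weight, I would pick a basis element $m y^\alpha \de_\xi^j \delta_g$ of $E^{(\ell)}$ (so $j = |\alpha| - \ell$) and read off both sides. On the $\cM_g$ side, membership in $F_p \cM_g$ says $m \in F_{p - j - r - 1}\cM = F_{p + \ell - |\alpha| - r - 1}\cM$; on the $\Gamma_+(\cM)$ side, the image $\pm m \de_t^\alpha \delta_f$ lies in $F_{p + \ell - 1}\Gamma_+(\cM)$ iff $m \in F_{(p+\ell-1) - |\alpha| - r}\cM$---the same condition. The weight statement is even more direct: both sides reduce to $m \in W_{k+r}\cM$, uniformly in $(\alpha, j)$.

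For the $V$-filtration, my plan is to define the candidate filtration
\[ \widetilde{V}^\lambda \cM_g := \bigoplus_{\ell \in \Z} \varphi|_{E^{(\ell)}}^{-1}\bigl( V^{\lambda - \ell}\Gamma_+(\cM)\bigr)\]
on $\cM_g$ and verify that it satisfies the defining axioms of the microlocal $V$-filtration along $\xi$; uniqueness will then force $\widetilde{V}^\bullet = V^\bullet \cM_g$, which is the desired identity after restricting to each $E^{(\ell)}$. The intertwining relations of Proposition \ref{prop-varphiProperties} do nearly all the work: properties (3)--(4) convert the action of $y_i, \de_{y_i} \in V^0\cD_\xi$ into $-\de_{t_i} \in V^{-1}\cD_t$ and $t_i \in V^1\cD_t$ respectively, and combined with the eigenspace shifts ($y_i \colon E^{(\ell)} \to E^{(\ell+1)}$, $\de_{y_i} \colon E^{(\ell)} \to E^{(\ell-1)}$) this matches up the $V$-filtration indices on both sides; property (2) makes $\de_\xi$ act as $-\mathrm{id}$ under the identifications $\varphi|_{E^{(\ell)}}, \varphi|_{E^{(\ell-1)}}$, so it bijects $\widetilde V^\lambda \cap E^{(\ell)}$ onto $\widetilde V^{\lambda - 1}\cap E^{(\ell-1)}$; a direct computation using the $\xi$-action together with property (6) establishes $\xi \widetilde V^\lambda \subseteq \widetilde V^{\lambda+1}$; and nilpotency of $s + \lambda$ on $\mathrm{gr}_{\widetilde V}^\lambda \cap E^{(\ell)}$ follows from property (6), $\varphi s = (s - \ell) \varphi$, which converts it into nilpotency of $s + (\lambda - \ell)$ on $\mathrm{gr}_V^{\lambda - \ell} \Gamma_+(\cM)$. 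Discreteness, left continuity and exhaustiveness pass through $\varphi$ eigenspace-by-eigenspace.

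The main technical obstacle is $V^0 \cD_\xi$-coherence of $\widetilde V^\bullet$, because the generating subalgebras $V^0 \cD_\xi$ and $V^0 \cD_t$ do not correspond one-to-one under $\varphi$. My plan is to exploit the bijectivity of $\de_\xi$ on $\cM_g$ to reduce coherence to a single eigenspace, say $E^{(0)}$, and then match $V^0 \cD_\xi|_{E^{(0)}}$-generation against $V^0 \cD_t$-generation of $V^\lambda \Gamma_+(\cM)$ using the relation $s = -\de_\xi \xi$ (and property (6)) together with the $y_i, \de_{y_i}$ intertwinings. No new ingredients beyond Proposition \ref{prop-varphiProperties} are required---precisely why, as the paper notes, the arguments of \cite{CDMO}*{Sect. 3} and \cite{D-Microlocal}*{Sect. 3} carry over verbatim to the case $\cM \neq \cO_X$, with only the Hodge-filtration shift internal to $\cM$ intervening.
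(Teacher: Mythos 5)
Your proposal is correct and follows essentially the same route as the paper: the Hodge and weight identities are verified by the same index bookkeeping on elements $m\,y^\alpha \de_\xi^{|\alpha|-\ell}\delta_g$, and the $V$-filtration identity is established by transporting the filtration through $\varphi$ and invoking uniqueness of the (microlocal) $V$-filtration, which is exactly the argument of \cite{CDMO}*{Thm. 3.3} that the paper cites rather than repeats. Your sketch of how the axioms (compatibility with $V^\bullet\cD$, bijectivity of $\de_\xi$, nilpotency of $s+\lambda$ via property (6), and coherence) transfer under the intertwining relations of Proposition \ref{prop-varphiProperties} is an accurate expansion of that deferred argument.
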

\begin{proof} The proof for the $V$-filtrations goes through in exactly the same way as in \cite{CDMO}*{Thm. 3.3}, by replacing $\cO_X$ with $\cM$. We do not repeat that argument here: the main point is to follow the argument for uniqueness of $V$-filtrations.

The claims on the Hodge and weight filtrations follow by keeping track of indices. An arbitrary element of $E^{(\ell)}$ is of the form
\[ m =  \sum_{\alpha \in \N^r} m_\alpha y^\alpha \de_\xi^{|\alpha|-\ell} \delta_g,\]
which maps to $\sum_{\alpha \in \N^r} (-1)^{\ell} m_\alpha \de_t^\alpha \delta_f$ in $\Gamma_+(\cM)$.

We have $m \in F_p E^{(\ell)}$ if and only if $m_\alpha \in F_{p+\ell -|\alpha|-r-1}\cM$ for all $\alpha$. Hence, this maps into
\[ \bigoplus_{\alpha \in \N^r} F_{p+\ell-|\alpha|-r-1}\cM \de_t^\alpha \delta_f = F_{p+\ell - 1} \Gamma_+(\cM).\]

Similarly, $m\in W_k E^{(\ell)}$ if and only if $m_\alpha \in W_{k+r}\cM$ for all $\alpha$, and so this maps to
\[ \bigoplus_{\alpha \in \N^r} W_{k+r} \cM \de_t^\alpha \delta_f = W_{k + r}\Gamma_+(\cM).\]
\end{proof}

We can now prove the first main result of this section, relating $\phi_g(\pi^!(M))[-r]$ and $\mu_Z(M)$.

\begin{proof}[Proof of Theorem \ref{thm-Microlocalization}] By Equation \ref{eq-MicrolocalSameGr}, it suffices to use $\cM_g$ in place of $\cM[y_1,\dots, y_r,\de_\xi]\delta_g$. We have by definition \[ F_p L_{k} \phi_{\xi,\lambda}\cM_g = F_{p+1-\lceil \lambda\rceil} L_k {\rm gr}_V^{\lambda}(\cM_g) = F_{p+1-\lceil \lambda \rceil} {\rm gr}_V^{\lambda}(W_{k+\lceil \lambda \rceil} \cM_g).\]

Decomposing into eigenspaces for $\theta_y -s$, we get that this is equal to
\[ \bigoplus_{\ell \in \Z} F_{p+1-\lceil \lambda \rceil}{\rm gr}_V^{\lambda}(W_{k+\lceil \lambda \rceil} \cM_g)^{(\ell)}.\]

Note that Proposition \ref{prop-microlocalHodgeVW} tells us that $\varphi$ induces an isomorphism
\[ F_{p+1-\lceil \lambda \rceil} {\rm gr}_V^{\lambda}(W_{k+\lceil \lambda \rceil} \cM_g)^{(\ell)} \cong F_{p+\ell -\lceil \lambda\rceil} {\rm gr}_V^{\lambda - \ell}(W_{k+r+ \lceil \lambda \rceil} \Gamma_+(\cM)).\]

As $\varphi$ commutes with $N$, the relative monodromy filtrations are isomorphic to each other (up to the corresponding shift). But this shift is precisely the one we see in the Fourier transform as in Theorem \ref{thm-FourierHodgeWeight} (if $\chi = \lambda -\ell$, then $\lceil \chi \rceil = \lceil \lambda \rceil - \ell$). It is clear by the definition of $\varphi$ that the isomorphism we have defined is $\cD$-linear, using for example the fact that $\varphi \circ y_i = (- \de_{t_i}) \circ \varphi$, which is how the action is defined on the Fourier transform.
\end{proof}

We end this section with a proof of Theorem \ref{thm-MHMLocalization}. We consider the localization triangle
\[ \sigma_* \sigma^! \pi^! \to \pi^!  \to j_* j^* \pi^! \xrightarrow[]{+1},\]
and we apply $\sigma^! \phi_{g,1}$ to it. 

\begin{proof}[Proof of Theorem \ref{thm-MHMLocalization}] Let $\widetilde{i}_Z\colon Z \times \A^r_y \to X \times \A^r_y$ be the natural inclusion. The proof of \cite{Davison}*{Thm. A.1} shows the vanishing
\[ \pi_! \widetilde{i}_{Z*} \widetilde{i}_{Z}^* \psi_{g,1} \pi^* = 0,\]
equivalently,
\[ \pi_! \widetilde{i}_{Z*} \widetilde{i}_{Z}^* \psi_{g,1} \pi^! = 0,\]
as a functor on bounded constructible complexes. As the functor 
\[{\rm rat}\colon D^b({\rm MHM}(X)) \to D^b_c(X^{\rm an})\] is faithful, this vanishing holds for mixed Hodge modules, too. 

Let $i_g\colon\{g =0\} \subseteq X\times \A^r_y$ be the closed embedding, so we have a morphism between exact triangles
\[ \begin{tikzcd} \psi_{g,1} \sigma_* \sigma^! \pi^! \ar[r] \ar[d] & \phi_{g,1} \sigma_* \sigma^! \pi^! \ar[r] \ar[d] & i_{g*}i_g^* \sigma_* \sigma^! \pi^! \ar[d] \ar[r,"+1"] & .\\ \psi_{g,1} \pi^! \ar[r] & \phi_{g,1}  \pi^! \ar[r] & i_{g*}i_g^* \pi^! \ar[r,"+1"] & . \end{tikzcd}.\]

As $\sigma(X) \subseteq \{g=0\}$, the top left term is 0 and so the second arrow in the top row is an isomorphism. For the same reason, the top right term is simply $\sigma_* \sigma^! \pi^!$. If we apply $\pi_! \widetilde{i}_{Z*} \widetilde{i}_Z^*$ to the bottom row, then the left-most term vanishes, and so we get an isomorphism
\[ \pi_! \widetilde{i}_{Z*} \widetilde{i}_Z^* \phi_{g,1} \pi^! \cong \pi_! \widetilde{i}_{Z*}\widetilde{i}_Z^* \pi^!.\]

Note that by the same argument as in the proof of Lemma \ref{lem-supportVanCycles} or by \cite{Davison}*{Lem. A.4}, we have ${\rm supp}(\phi_g \pi^!(M)) \subseteq Z \times \A^r_y$ , and so we have a natural isomorphism
\[ \pi_! \phi_{g,1} \pi^! \cong  \pi_! \widetilde{i}_{Z*}\widetilde{i}_Z^* \phi_{g,1} \pi^!.\]

This gives a commutative diagram
\[ \begin{tikzcd} \pi_! \phi_{g,1} \sigma_* \sigma^! \pi^! \ar[r,"\cong"] \ar[d] & \pi_! i_{g*}i_g^* \sigma_* \sigma^! \pi^! \ar[d] & \pi_! \sigma_* \sigma^! \pi^! \ar[l,swap,"\cong"] \ar[d,"\cong"]\\
\pi_! \phi_{g,1} \pi^! \ar[r] \ar[d,"\cong"] & \pi_! i_{g*}i_{g}^* \pi^! \ar[d] & \pi_! \pi^! \ar[l,swap] \ar[d] \\
\pi_! \widetilde{i}_{Z*}\widetilde{i}_Z^* \phi_{g,1} \pi^! \ar[r,"\cong"] & \pi_! \widetilde{i}_{Z*}\widetilde{i}_Z^* i_{g*}i_g^* \pi^! & \pi_! \widetilde{i}_{Z*} \widetilde{i}_Z^* \pi^! \ar[l,swap,"\cong"]
\end{tikzcd}\]
where maps from the rightmost column to the central column are from the adjunction of $(i_g^*,i_{g*})$. The first row of vertical morphisms come from the adjunction for $(\sigma_*,\sigma^!)$ and the second row of vertical morphisms comes from the pair $(\widetilde{i}_Z^*,\widetilde{i}_{Z*})$.

In summary, we have given isomorphisms of the map $\pi_! \phi_{g,1} \sigma_* \sigma^! \pi^! \to \pi_! \phi_{g,1} \pi^!$ with the adjunction morphism
\[ \pi_! \pi^! \to \pi_! \widetilde{i}_{Z*} \widetilde{i}_Z^* \pi^!,\]
which by Smooth Base Change \ref{eg-BaseChange} is isomorphic to $\pi_! \pi^!$ applied to the adjunction
\[ {\rm id} \to i_{Z*} i_Z^*.\]

Thus, using the fact that $\pi_! \pi^! \to {\rm id}$ is an isomorphism, we have given a natural isomorphism between the left-most morphism in the following exact triangles:
\[ \pi_! \phi_{g,1} \sigma_* \sigma^! \pi^! \to \pi_! \phi_{g,1} \pi^! \to \pi_! \phi_{g,1} j_* j^* \pi^! \xrightarrow[]{+1}\]
\[ {\rm id} \to i_{Z*} i_Z^* \to j_! j^* [1] \xrightarrow[]{+1},\]
which, by Lemma \ref{lem-localizationTriangles} below, gives a unique, natural isomorphism between the triangles.

The isomorphism between dual triangles can be shown similarly, or follows from applying duality.
\end{proof}

The following lemma was pointed out to the author by Martin Gallauer. The main idea is that localization triangles are rather special with regards to uniqueness of (morphisms between) cones.

\begin{lem} \label{lem-localizationTriangles} Let $i\colon Z \to X$ and $j\colon X \setminus Z \to X$ be closed and open embeddings of algebraic varieties, respectively. Let $A \to B\xrightarrow[]{\alpha} C \xrightarrow[]{\beta} A[1]$ be a distinguished triangle in $D^b({\rm MHM}(X))$ such that $A$ is supported on $Z$ and so that the morphism $A\to B$ fits into a commutative diagram
\[\begin{tikzcd} A \ar[r] \ar[d] & B \ar[d]\\ i_* i^! M \ar[r] & M \end{tikzcd}.\]

Then there is a \emph{unique} morphism $h\colon C \to j_* j^* M$ so that there is a morphism of triangles
\[\begin{tikzcd} A \ar[r] \ar[d] & B \ar[r,"\alpha"] \ar[d] & C \ar[r,"\beta"] \ar[d,"h"] & A[1] \ar[d]\\ i_* i^! M \ar[r] & M \ar[r] & j_* j^* M \ar[r] & i_*i^! M[1]\end{tikzcd}.\]

A similar statement holds for the dual localization triangle.
\end{lem}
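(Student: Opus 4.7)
The plan is to split into existence and uniqueness of $h$. For existence, I would invoke axiom TR3 of the triangulated category $D^b({\rm MHM}(X))$: the given commutative square whose rows are $A \to B$ and $i_*i^!M \to M$ is the leftmost square of a pair of distinguished triangles, hence it extends to a morphism of triangles. This produces at least one $h\colon C \to j_*j^*M$ making the middle square commute and, simultaneously, making the rightmost square compatible with the connecting morphisms.

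For uniqueness, suppose $h$ and $h'$ are two such morphisms. Both satisfy $h \circ \alpha = h' \circ \alpha$, namely the composite $B \to M \to j_*j^*M$ coming from the assumed middle square. Hence $(h - h') \circ \alpha = 0$. Applying the contravariant cohomological functor $\Hom(-, j_*j^*M)$ to the distinguished triangle $A \to B \xrightarrow{\alpha} C \xrightarrow{\beta} A[1]$ gives a long exact sequence in which the kernel of $\alpha^*$ coincides with the image of $\beta^*$. Thus there exists $k \in \Hom(A[1], j_*j^*M)$ with $h - h' = k \circ \beta$.

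The crux of the argument is then the vanishing $\Hom(A[1], j_*j^*M) = 0$. Since $A$ is supported on $Z$, we have $j^*A = 0$, and the $(j^*, j_*)$-adjunction yields
\[ \Hom(A[1], j_*j^*M) \cong \Hom(j^*A[1], j^*M) = 0. \]
This forces $k = 0$ and hence $h = h'$, establishing uniqueness. The analogous statement for the dual localization triangle $j_!j^*M \to M \to i_*i^*M \xrightarrow{+1}$ follows either by applying Verdier duality $\mathbf{D}_X$ to this case (using $\mathbf{D}_X \circ j_*j^* \cong j_!j^* \circ \mathbf{D}_X$ and $\mathbf{D}_X \circ i_*i^! \cong i_*i^* \circ \mathbf{D}_X$) or by repeating the argument with the pair $(j_!, j^*)$ in place of $(j^*, j_*)$.

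I do not anticipate any substantial obstacle: the argument is a standard instance of the fact that morphisms of distinguished triangles are unique as soon as the relevant $\Hom$ group (here $\Hom(A[1], j_*j^*M)$) vanishes, and the only input beyond the triangulated-category formalism is the adjunction-based vanishing, which is immediate from the support hypothesis on $A$.
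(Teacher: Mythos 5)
Your proof is correct and follows essentially the same route as the paper: existence via TR3, and uniqueness by factoring the difference of two fillers through $\beta$ and showing $\Hom(A[1], j_*j^*M)=0$. The only (immaterial) difference is that you obtain this vanishing from the $(j^*,j_*)$ adjunction and $j^*A=0$, whereas the paper writes $A=i_*\overline{A}$ by Kashiwara's equivalence and uses the $(i_*,i^!)$ adjunction together with $i^!j_*=0$.
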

\begin{proof} First of all, by Kashiwara's equivalence, we can write $A = i_* \overline{A}$ for some $\overline{A} \in D^b({\rm MHM}(Z))$, see \cite{SaitoMHM}*{Cor. 2.23}. By the theory of triangulated categories, there always exists at least one morphism $h\colon C \to j_*j^* M$ which gives a morphism of exact triangles.

If $h_1,h_2\colon C \to j_* j^* M$ are two such morphisms which give a morphism of exact triangles, then in particular, they satisfy $h_1 \circ \alpha = h_2 \circ \alpha$, and so $(h_1 - h_2)\circ \alpha = 0$. By the weak cokernel property of exact triangles, this gives rise to a morphism $\gamma\colon A[1] \to j_* j^* M$ so that $h_1 - h_2 = \gamma \circ \beta$. But then
\[ {\rm Hom}(A[1],j_*j^*M) \to {\rm Hom}(i_* \overline{A}[1],j_*j^* M) \cong {\rm Hom}(\overline{A}[1],i^! j_*j^* M) = 0,\]
using the adjunction $(i_*,i^!)$ and the fact that $i^! j_* = 0$. Thus, $\gamma = 0$ and so $h_1 = h_2$.
\end{proof}

\section{Radon Transform} \label{sect-Radon} This section consists of the proof of Theorem \ref{thm-Radon}. We begin by reducing the claim about the Grassmannian Radon transform to the ``affine" Radon transform. Let $Z'' = \{ (\pi(v),\lambda_1,\dots, \lambda_k) \in \P \times \bS(k,n) \mid \lambda_1(v) = \dots = \lambda_k(v) = 0\}$ with complement $C''$. We have the commutative diagrams, with each square being Cartesian:
\[ \begin{tikzcd} & Z' \ar[r, "\pi_2^{Z'}"] \ar[dl,swap,"\pi_1^{Z'}"] & \bhW \\ \P & Z'' \ar[d,"\pi_{Z''}"] \ar[u,swap,"r_{Z''}"] \ar[l,swap,"\pi_1^{Z''}"] \ar[r,"\pi_2^{Z''}"] & \bS(k,n) \ar[d,"\pi"] \ar[u,swap,"r"] \\ & Z \ar[ul,"\pi_1^Z"] \ar[r,"\pi_2^Z"] & \mathbf G \end{tikzcd}, \quad \begin{tikzcd} & C' \ar[r, "\pi_2^{C'}"] \ar[dl,swap,"\pi_1^{C'}"] & \bhW \\ \P & C'' \ar[d,"\pi_{C''}"] \ar[u,swap,"r_{C''}"] \ar[l,swap,"\pi_1^{C''}"] \ar[r,"\pi_2^{C''}"] & \bS(k,n) \ar[d,"\pi"] \ar[u,swap,"r"] \\ & C \ar[ul,"\pi_1^C"] \ar[r,"\pi_2^C"] & \mathbf G \end{tikzcd}.\]

The projections are induced by the ones in the following diagrams:
\[ \begin{tikzcd} & \P \times \bhW \ar[dl,swap,"\overline{p}"] \ar[dr,"\rho_W"] & \\ \P& & \bhW \end{tikzcd},\quad \begin{tikzcd} & \P \times \mathbf G \ar[dl,swap,"\pi_1"] \ar[dr,"\pi_2"] & \\ \P & & \mathbf G \end{tikzcd},\]
\[ \begin{tikzcd} & \P \times \bS(k,n) \ar[dl,swap,"\pi_1^S"] \ar[dr,"\pi_2^S"] & \\ \P & & \bS(k,n) \end{tikzcd}.\]

\begin{lem} \label{lem-compareGrassmannian} There is a natural isomorphism of triangles
\[ r^!(\pi_{2!}^{Z'} \pi_1^{Z'!} \to \rho_{W*} \overline{p}^! \to \pi_{2*}^{C'} \pi_1^{C'!}\xrightarrow[]{+1})\]
\[ \pi^! (\pi_{2!}^{Z} \pi_1^{Z!} \to \pi_{2*} \pi_1^! \to \pi_{2*}^{C} \pi_1^{C!}\xrightarrow[]{+1}),\]
and dually, there is a natural isomorphism of triangles
\[ r^*( \pi_{2!}^{C'} \pi_1^{C'*} \to \rho_{W!} \overline{p}^* \to \pi_{2*}^{Z'} \pi_1^{Z'*} \xrightarrow[]{+1} )\]
\[ \pi^*( \pi_{2!}^{C} \pi_1^{C*} \to  \pi_{2!} \pi_1^* \to \pi_{2*}^{Z} \pi_1^{Z*} \xrightarrow[]{+1}).\]
\end{lem}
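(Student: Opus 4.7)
The plan is to apply the base change identities of Example \ref{eg-BaseChange} to each of the three functors appearing in both triangles and identify them with a single common triangle constructed on $\bS(k,n)$.

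Both triangles are obtained by applying a composition of $*$-pushforward with shriek-pullback to the localization triangle
\[ i_{Z'*}i_{Z'}^!\to {\rm id}\to j_{C'*}j_{C'}^*\xrightarrow[]{+1}\]
on $\P\times \bhW$, respectively its analogue on $\P\times \mathbf G$. Properness of $Z'\to \bhW$ and $Z\to \mathbf G$ identifies the $*$-pushforward with the shriek-pushforward appearing in the leftmost terms of the stated triangles. Let $\pi_1^S, \pi_2^S$ denote the projections from $\P\times \bS(k,n)$. For the leftmost term, the general base change $g^!f_*\cong (f')_*(g')^!$ applied to the Cartesian square with corners $Z'',Z',\bS(k,n),\bhW$, together with the smooth base change version applied to the Cartesian square with corners $Z'',Z,\bS(k,n),\mathbf G$, combined with the factorizations $\pi_1^{Z'}\circ r_{Z''}=\pi_1^{Z''}=\pi_1^Z\circ \pi_{Z''}$, yield natural isomorphisms
\[ r^!\pi_{2!}^{Z'}\pi_1^{Z'!}\cong \pi_{2!}^{Z''}\pi_1^{Z''!}\cong \pi^!\pi_{2!}^{Z}\pi_1^{Z!}.\]
The rightmost term is handled analogously via $C''$, and the middle term reduces, via Cartesian squares involving $\P\times \bS(k,n)\hookrightarrow \P\times \bhW$ and $\P\times \bS(k,n)\to \P\times \mathbf G$, to the common functor $\pi_{2*}^S\pi_1^{S!}$.

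The main thing to verify, and likely the fiddliest point, is that these object-wise isomorphisms are compatible with the connecting morphisms of the two triangles. For this, I will observe that the base change isomorphisms are natural, and that shriek-restriction along the open immersion ${\rm id}_\P\times r\colon \P\times \bS(k,n)\hookrightarrow \P\times \bhW$ sends the localization triangle on $\P\times \bhW$ to the localization triangle attached to the closed stratum $Z''\subseteq \P\times \bS(k,n)$; the analogous statement holds for smooth shriek-pullback along ${\rm id}_\P\times \pi$. Combining these naturalities produces the desired morphism of triangles, and once the two outer vertical arrows are pinned down, Lemma \ref{lem-localizationTriangles} guarantees that the middle vertical arrow is uniquely determined.

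The dual statement is proved by the same strategy applied to the dual localization triangle $j_{C'!}j_{C'}^*\to {\rm id}\to i_{Z'*}i_{Z'}^*\xrightarrow[]{+1}$, now using the base change identities $g^*f_!\cong (f')_!(g')^*$ and the smooth base change $g^*f_*\cong (f')_*(g')^*$. Alternatively it can be deduced from the first statement by Verdier duality, using that $r$ is an open immersion and $\pi$ is smooth surjective, so that $\mathbf D$ interchanges $r^!$ with $r^*$ and $\pi^!$ with $\pi^*$ up to a shift and Tate twist that is uniform across the triangle.
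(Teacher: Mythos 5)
Your proposal is correct and follows the same route as the paper, which simply notes that $r$ is an open embedding and $\pi$ is smooth and invokes the base change isomorphisms of Example \ref{eg-BaseChange}; you have merely written out the term-by-term identifications through $Z''$, $C''$, and $\P\times\bS(k,n)$ that the paper leaves implicit. One small imprecision: Lemma \ref{lem-localizationTriangles} pins down the arrow on the third term (the cone) given the first two vertical arrows, not the middle one, but since the naturality of base change already supplies compatible arrows on all three terms this does not affect the argument.
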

\begin{proof} Note that $r$ is an open embedding and $\pi$ is smooth. These isomorphisms follow immediately from Base Change. 
\end{proof}

As noted in the proof of \cite{RadonFourier}*{Prop. 1}, it is important to view the bundle $\widetilde{\bV}$ as a $\mathbf G_m$-quotient
\[ \tau\colon \A^k_\zeta \times \bV^0 \to \widetilde{\bV},\quad (\zeta,v) \mapsto [\zeta_1 v,\zeta_2 v,\dots, \zeta_k v],\]
where the $\mathbf G_m$-action is $c(\zeta,x) = (c^{-1} \zeta, cx)$. The compositions with the natural maps $\widetilde{\jmath}\colon \widetilde{\bV} \to \bV^k$ and $\widetilde{\pi}\colon \widetilde{\bV} \to \P$ are 
\[ \widetilde{\jmath} \circ \tau\colon \A^k_\zeta \times \bV^0 \to \bV^k, (\zeta,v) \mapsto (\zeta_1 v, \zeta_2 v,\dots, \zeta_k v), \quad \widetilde{\pi} \circ \tau\colon \A^k_\zeta \times \bV^0 \to \P, (\zeta,v) \mapsto \pi(v).\]

The two key inputs are Theorem \ref{thm-MHMLocalization} and the following mixed Hodge module version of \cite{RadonFourier}*{Lem. 2}:
\begin{lem}[\cite{GequivMHM}*{Prop. 2.1}] \label{lem-smoothFF} Let $f\colon X \to Y$ be a smooth map between smooth complex algebraic varieties with connected fibers. Then the functors 
\[f^*, f^!\colon D^b({\rm MHM}(Y)) \to D^b({\rm MHM}(X))\]
are exact and fully faithful.
\end{lem}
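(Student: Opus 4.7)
The statement asks for two properties of $f^{\ast}$ and $f^{!}$: $t$-exactness with respect to the natural $t$-structure on $D^{b}({\rm MHM})$, and full faithfulness on the resulting heart. The plan is to treat these in turn, with the full faithfulness being where all the work lies.

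For exactness, I would appeal to the well-known perverse analogue: for $f$ smooth of relative dimension $d$, the functor $f^{\ast}[d]$ is $t$-exact for the perverse $t$-structure, since smooth pullback shifts perverse degree by exactly $d$. At the level of filtered $\cD$-modules, smooth pullback is the usual tensor pullback, which preserves the Hodge and weight filtrations strictly (this is built into Saito's definition of $f^{\ast}$ for smooth morphisms). The combination gives bi-strict $t$-exactness on ${\rm MHM}$. Since $f^{!}\cong f^{\ast}(d)[2d]$ for smooth $f$, the corresponding statement for $f^{!}$ is immediate.

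For full faithfulness of $f^{\ast}$, I would use the adjunction $(f^{\ast}, f_{\ast})$: showing $f^{\ast}$ is fully faithful on ${\rm MHM}(Y)$ amounts to showing the unit $\eta_{B}\colon B\to f_{\ast}f^{\ast}B$ induces an isomorphism on $\Hom(A,-)$ for every $A\in {\rm MHM}(Y)$, equivalently that the cone of $\eta_{B}$ lies in $D^{\geq 1}({\rm MHM}(Y))$ for the natural $t$-structure. Using Saito's smooth base change (Example \ref{eg-BaseChange}), this property of the cone can be checked after restriction to points of $Y$, reducing to the case of the structure map $\pi\colon F\to \mathrm{pt}$ where $F$ is a smooth connected fiber of $f$. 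In that case, $\pi_{\ast}\pi^{\ast}B\cong B\otimes^{L}R\Gamma(F, \Q^{H}_{F})$ by the projection formula for MHM; connectedness of $F$ gives $H^{0}(F, \Q) = \Q$, while the higher cohomologies $H^{i}(F,\Q^H)$ for $i\geq 1$ contribute to $B\otimes^{L}R\Gamma(F,\Q^{H}_{F})$ only in strictly positive cohomological degrees. This places the cone of $\eta_{B}$ in $D^{\geq 1}$, as required, and proves $f^{\ast}$ is fully faithful on ${\rm MHM}$. The statement for $f^{!}$ follows by duality, or directly from $(f_{!}, f^{!})$ combined with $f^{!}\cong f^{\ast}(d)[2d]$.

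The main obstacle is the reduction to the product case, since smooth maps between algebraic varieties are not in general Zariski-locally trivial. One must therefore rely on smooth base change at the MHM level rather than on explicit trivializations, which is where Saito's base change result is critical; once in the product setting, the use of the projection formula and the hypercohomology spectral sequence is standard, and connectedness of the fibers is used in precisely the expected way, to ensure that only the $H^{0}(F,\Q)=\Q$ part contributes to $\Hom$-groups in the heart.
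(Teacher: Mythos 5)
The paper does not actually prove this lemma: it is imported wholesale from Achar's notes (\cite{GequivMHM}*{Prop.~2.1}), so any argument you supply is by definition a different route. That said, your argument is the standard one behind the cited result (adjunction $(f^*,f_*)$, the unit $\eta_B\colon B\to f_*f^*B$ having cone in ${}^pD^{\geq 1}$, connectedness of fibers entering through $R^0f_*f^*\mathcal L\cong\mathcal L$), and it is essentially sound as a sketch, with the following caveats.

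The substantive point you should make explicit is that full faithfulness can only hold for the shifted functors $f^*[d]$, $f^![-d]$ ($d$ the relative dimension) between the \emph{hearts} $\mathrm{MHM}(Y)\to\mathrm{MHM}(X)$, not for $f^*,f^!$ on $D^b(\mathrm{MHM})$ as the statement literally reads. For $f\colon \C^*\to \mathrm{pt}$ one has $f_*f^*\Q^H\cong \Q^H\oplus \Q^H(-1)[-1]$, so $\mathrm{Hom}(f^*(\Q^H(-1)[-1]),f^*\Q^H)\cong \mathrm{Ext}^1(\Q(-1),\Q)\oplus \mathrm{End}(\Q(-1))$ acquires an extra summand $\Q$ compared with $\mathrm{Hom}(\Q^H(-1)[-1],\Q^H)$; hence derived-level full faithfulness fails. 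Your plan correctly announces full faithfulness ``on the resulting heart,'' and your cone argument (cone of $\eta_B$ in ${}^pD^{\geq 1}$ kills $\mathrm{Hom}(A,-[-1])$ and $\mathrm{Hom}(A,-)$ for $A$ in the heart) proves exactly that heart-level statement, which is also what Achar asserts; you should just flag the discrepancy with the statement as printed. Two smaller repairs: the reduction to fibers is not ``smooth base change'' ($i_y$ is not smooth) but the unconditional isomorphism $i_y^!f_*\cong f_{y*}i_{X_y}^!$ of Example \ref{eg-BaseChange}, combined with $i_{X_y}^!f^*\cong f_y^*i_y^!$ for $f$ smooth; and the cosupport condition defining ${}^pD^{\geq 1}$ must be verified on all strata of a stratification adapted to $B$, not only at closed points, though the same Leray argument (fibers connected, higher $R^if_*$ contributing in degrees $\geq 1$) goes through stratum by stratum. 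Finally, you implicitly use that every fiber is nonempty, i.e.\ that $f$ is surjective, as in the hypotheses elsewhere in the paper.
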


Before proceeding with the proof, we give a commutative diagram which sets the notation we use in the proof below. This is our analogue of the diagram \cite{RadonFourier}*{Pg. 464}. The system of the notation is the following: $p$ denotes projection from the $\bhW$ term, $q$ is the projection away from the $\bV^k$ term, $\rho$ is the projection from the $\P$ term, and $(-)'$ is used for taking the product with $\bhW$.

\begin{equation} \label{eq-BigDiagram} \begin{tikzcd} \A^k_\zeta \times \bV^0 \ar[r,"\tau"] & \widetilde{\bV} \ar[r,"\widetilde{\iota}"] & \P \times \bV^k \\
\A^k_\zeta \times \bV^0 \times \bhW \ar[u,swap,"p_{12}"] \ar[r,"\tau' "] \ar[d,"p_\zeta'"] & \widetilde{\bV} \times \bhW \ar[r,"\widetilde{\iota}' "] \ar[u,swap,"\widetilde{p}"] \ar[d,"\widetilde{\pi}' "] \ar[dr,"\widetilde{\jmath}' "]& \P \times \bV^k \times \bhW \ar[u,swap,"q_{12}"] \ar[d,"\rho'"] \ar[dashed,dl,] \\ \bV^0 \times \bhW \ar[r,"\pi' "] & \P \times \bhW & \bV^k \times \bhW \end{tikzcd},\end{equation}
where the dashed arrow is denoted $p_{13}\colon \P \times \bV \times \bhW \to \P \times \bhW$.

We will prove the following, which by Lemma \ref{lem-compareGrassmannian} gives a proof of Theorem \ref{thm-Radon}:
\begin{prop} \label{prop-affineRadon} In the notation above, there is a natural isomorphism of triangles
\begin{equation} \label{eq-TriangleFL} {\rm FL} \eta_* \to {\rm FL} \widetilde{\jmath}_* \widetilde{\pi}^! \to {\rm FL} \widetilde{\jmath}_{\circ*}\widetilde{\pi}_\circ^! \xrightarrow[]{+1},\end{equation}
\begin{equation} \label{eq-TriangleAffineRadon} (\rho_{W!} \overline{p}^* \to \pi_{2*}^{Z'} \pi_1^{Z'*} \to \pi_{2!}^{C'} \pi_1^{C'*}[1] \xrightarrow[]{+1})(k(n+1))[k(n+1)] \end{equation}

Dually, there is an isomorphism of triangles
\[ {\rm FL} \widetilde{\jmath}_{\circ !} \widetilde{\pi}_{\circ}^* \to {\rm FL} \widetilde{\jmath}_! \widetilde{\pi}^* \to {\rm FL} \eta_* \xrightarrow[]{+1}\]
\[ (\pi_{2*}^{C'} \pi_1^{C'*}[-1] \to \pi_{2!}^{Z'} \pi_1^{Z'!} \to \rho_{W!} \overline{p}^* \xrightarrow[]{+1})[-k(n+1)].\]
\end{prop}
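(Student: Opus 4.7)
The strategy is to bring both triangles down to a common localization triangle on $\bV^0\times\bhW$ by using the smooth $\mathbf{G}_m^k$-quotient map $\tau'\colon \A^k_\zeta \times \bV^0 \times \bhW \to \widetilde{\bV}\times\bhW$ and then applying Theorem \ref{thm-MHMLocalization}. First I would recognize the triangle \ref{eq-TriangleFL} as the image under ${\rm FL}\circ\widetilde{\jmath}_*\circ\widetilde{\pi}^!$ of the standard localization triangle $\iota_{E*}\iota_E^!\to \id \to \iota_{\circ*}\iota_\circ^*\xrightarrow[]{+1}$ on $\widetilde{\bV}$ (where $\iota_E,\iota_\circ$ are the closed/open inclusions of the zero section and its complement), using $\widetilde{\pi}\iota_E=\id_\P$ to identify the leftmost term with $\eta_*M$. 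Because $\widetilde{\bV}\to\P$ is a vector bundle and $\widetilde{\pi}^!M$ is pulled back from the base, $\widetilde{\jmath}_*\widetilde{\pi}^!M$ is a unipotent monodromic mixed Hodge module on $\bV^k$, so in this setting ${\rm FL}={\rm FL}_{\rm unip}$. Corollary \ref{cor-lastFunctor}, base change along the diagram \ref{eq-BigDiagram}, and the compatibility of $\phi_{g,1}$ with the proper pushforward $\widetilde{\jmath}'_*$ allow each term of \ref{eq-TriangleFL} to be rewritten in the form $\sigma^!\widetilde{\jmath}'_*\phi_{g',1}(\cdots)[-k]$, where $g'=g\circ\widetilde{\jmath}'=\sum_{i=1}^k\lambda_i(v_i)$ on $\widetilde{\bV}\times\bhW$.

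Next I would pull everything back along $\tau'$. Since $\tau'$ is smooth with connected fibers, Lemma \ref{lem-smoothFF} makes $(\tau')^!$ exact and fully faithful, so it suffices to prove the desired isomorphism after this pullback. Vanishing cycles commute with smooth pullback, and $g'\circ\tau'=\tilde g:=\sum_{i=1}^k\zeta_i\lambda_i(v)$ on $\A^k_\zeta\times\bV^0\times\bhW$; this is exactly the function appearing in Theorem \ref{thm-MHMLocalization} applied with $X=\bV^0\times\bhW$, $r=k$, and $f_i(v,\lambda)=\lambda_i(v)$. That theorem then supplies a natural isomorphism between the resulting $\sigma^!\phi_{\tilde g,1}$-triangle (applied to $\pi_{02}^!M$, where $\pi_{02}\colon\bV^0\times\bhW\to\P$ sends $(v,\lambda)\mapsto[v]$) and the localization triangle
\[ \id \to i_{\widetilde Z *}i_{\widetilde Z}^* \to j_{\widetilde C !}j_{\widetilde C}^*[1]\xrightarrow[]{+1} \]
on $\bV^0\times\bhW$ applied to $\pi_{02}^!M$, where $\widetilde Z\subset\bV^0\times\bhW$ is the common zero locus of the $\lambda_i(v)$, equal to the pullback $(\pi')^{-1}(Z')$ under $\pi'\colon\bV^0\times\bhW\to\P\times\bhW$.

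Finally, smooth base change along $\pi'$ identifies this last localization triangle with the $(\pi')^*$-pullback of the Radon localization triangle on $\P\times\bhW$ for $(Z',C')$ applied to $\overline{p}^*M$; pushing forward to $\bhW$ along the smooth projection and restricting along the open embedding $r\colon\bS(k,n)\hookrightarrow\bhW$ yields the affine Radon triangle \ref{eq-TriangleAffineRadon}. The overall Tate twist $(k(n+1))[k(n+1)]$ assembles from the $[-k]$ built into the definition of ${\rm FL}$, together with the relative-dimension contributions of $\tau'$ (a $\mathbf G_m^k$-torsor) and of $\pi_{\bV^0}\colon\bV^0\to\P$ (a $\mathbf G_m$-torsor, with $\dim\bV^0=n+1$). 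The dual isomorphism of triangles then follows from this one by applying Verdier duality, using the compatibility $\mathbf D\circ{\rm FL}_{\rm unip}\cong({\rm FL}_{\rm unip}\circ\mathbf D)(-r)$.

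The main obstacle is ensuring compatibility of the \emph{morphisms}, not merely the objects, across all the base-change identifications. This is precisely where Theorem \ref{thm-MHMLocalization}, backed by the uniqueness result Lemma \ref{lem-localizationTriangles}, does the heavy lifting: it turns what would otherwise be a non-canonical identification of cones into a natural isomorphism of exact triangles. A secondary challenge is the bookkeeping of shifts and Tate twists needed to assemble the final single twist $(k(n+1))[k(n+1)]$.
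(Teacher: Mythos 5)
Your route is essentially the paper's: start from the localization triangle $i_{E*}i_E^!\widetilde{\pi}^!\to\widetilde{\pi}^!\to j_{\circ*}j_\circ^*\widetilde{\pi}^!\xrightarrow{+1}$ on $\widetilde{\bV}$, rewrite ${\rm FL}\,\widetilde{\jmath}_*$ via base change and the compatibility of $\phi_g$ with the proper map $\widetilde{\jmath}'$ as $\rho_{W!}\widetilde{\pi}'_!\phi_{\widetilde g}\widetilde{p}^!\,[-k(n+1)]$, descend through the $\mathbf G_m$-quotient presentation $\tau'$, apply Theorem \ref{thm-MHMLocalization} with $X=\bV^0\times\bhW$ and $f_i=g^{(i)}=\sum_j z_jw_j^{(i)}$, and obtain the dual half by duality. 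Two slips need repair, though. First, the full-faithfulness reduction (Lemma \ref{lem-smoothFF}) must be performed along $\pi'\colon\bV^0\times\bhW\to\P\times\bhW$, not along $\tau'$: both triangles to be compared are $\rho_{W!}$ of triangles on $\P\times\bhW$, so one strips off $\rho_{W!}$ and tests the isomorphism after applying $(\pi')^!$; full faithfulness of $(\tau')^!$ only detects morphisms between objects on $\widetilde{\bV}\times\bhW$, which is not where the Radon triangle lives. The map $\tau'$ enters only through the base-change isomorphism $(\pi')^!\widetilde{\pi}'_!\cong p'_{\zeta!}(\tau')^!$, after which smoothness of $\tau'$ lets $\phi_{\widetilde g}$ commute with $(\tau')^!$ and produces exactly the functor $p'_{\zeta!}\phi_{g_\zeta}p_{12}^!\tau^!$ to which Theorem \ref{thm-MHMLocalization} applies. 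Second, the shift built into ${\rm FL}$ here is $[-k(n+1)]$ (the rank of $\bV^k$ as a bundle over a point), not $[-k]$, so your twist bookkeeping as sketched would not close up; the final $(k(n+1))[k(n+1)]$ comes from comparing $\overline{p}^*$ with $\overline{p}^!$ on the Radon side against this $[-k(n+1)]$, not from relative dimensions of $\tau'$ or $\bV^0\to\P$. (Also, the restriction $r^*$ to $\bS(k,n)$ plays no role in the affine statement; it belongs to the reduction of Theorem \ref{thm-Radon} via Lemma \ref{lem-compareGrassmannian}.)
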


\begin{proof}[Proof of Prop. \ref{prop-affineRadon}] 
Let $E = \P \times \{0\} \subseteq \widetilde{\bV}$ be the zero section of the bundle with closed embedding $i_E\colon E \to \widetilde{\bV}$. The complement is $j_\circ \colon \widetilde{\bV}^\circ \to \widetilde{\bV}$. We have the localization triangle
\begin{equation}\label{eq-FLTriangle} i_{E*} i_E^! \widetilde{\pi}^! \to \widetilde{\pi}^! \to j_{\circ *} j_\circ^* \widetilde{\pi}^! \xrightarrow[]{+1} \end{equation}

First, note that applying ${\rm FL}\widetilde{\jmath}_*$ to this triangle gives Triangle \ref{eq-TriangleFL} from the Proposition statement. Indeed, note that $\widetilde{\jmath}\circ i_E\colon \P \times \{0\} \to \bV^k$ is the projection onto $0 \in \bV^k$, which we denote $\eta\colon \P \to \bV^k$, and we have $\widetilde{\pi} \circ i_E = {\rm id}_{\P}$.

We write
\[ {\rm FL}\widetilde{\jmath}_* = q_! \phi_g p^! \widetilde{\jmath}_*[-k(n+1)],\]
where $\dim(\bV^k) =k(n+1)$. The Cartesian diagram
\[ \begin{tikzcd} \widetilde{\bV} \times \bhW \ar[d,"\widetilde{p}"] \ar[r,"\widetilde{\iota}'"] & \P \times \bV^k \times \bhW \ar[r,"\rho'"] \ar[d] & \bV^k \times \bhW \ar[d,"p"] \\ \widetilde{\bV} \ar[r,"\widetilde{\iota}"] & \P \times \bV^k \ar[r,"\rho"] & \bV^k \end{tikzcd},\]
gives by Base Change a natural isomorphism $p^! \widetilde{\jmath}_* \cong \widetilde{\jmath}'_* \widetilde{p}^!$. As $\widetilde{\jmath}'$ is proper, if we let $\widetilde{g} = (\widetilde{\jmath}')^*(g)$, then this gives a natural isomorphism
\[ {\rm FL} \widetilde{\jmath}_* \cong q_! \widetilde{\jmath}'_* \phi_{\widetilde{g}} \widetilde{p}^![-k(n+1)].\]

Note that $q \circ \widetilde{\jmath}' = \rho_W \circ \widetilde{\pi}'$. So the right hand side is naturally isomorphic to
\[ \rho_{W!} \widetilde{\pi}'_! \phi_{\widetilde{g}} \widetilde{p}^![-k(n+1)].\]

On the other hand, Triangle \ref{eq-TriangleAffineRadon} is, by definition $\rho_{W!}$ applied to the triangle
\[\overline{p}^* \to i_{Z'*} i_{Z'}^* \overline{p}^* \to j_{C'!} j_{C'}^* \overline{p}^*[1] \xrightarrow[]{+1}, \]
which it will be convenient to rewrite (using the fact that $\overline{p}$ is smooth of relative dimension $k(n+1)$) as 
\begin{equation} \label{eq-TriangleAffineRadon2} (\overline{p}^! \to i_{Z'*} i_{Z'}^* \overline{p}^! \to j_{C'!} j_{C'}^* \overline{p}^![1] \xrightarrow[]{+1})(-k(n+1))[-2k(n+1)].\end{equation}

So we need to give an isomorphism of Triangle \ref{eq-TriangleAffineRadon2} with $\widetilde{\pi}'_! \phi_{\widetilde{g}} \widetilde{p}^! [-k(n+1)]$ applied to Triangle \ref{eq-FLTriangle}, and then to apply $\rho_{W!}$ to that isomorphism. We will do this by showing that $(\pi')^!$ applied to those triangles yields isomorphic triangles, which completes the proof by Lemma \ref{lem-smoothFF}.

We have the diagram
\[ \begin{tikzcd} \cG \ar[r,"i_{\cG}"] \ar[d] & \bV^0 \times \bhW \ar[d,"\pi'"] & \cU \ar[l,swap,"j_{\cU}"] \ar[d] \\ Z' \ar[r,"i_{Z'}"] & \P \times \bhW & C' \ar[l,swap,"j_{C'}"] \end{tikzcd},\]
and so $(\pi')^!$ applied to Triangle \ref{eq-TriangleAffineRadon2} gives
\[  ((\pi')^! \overline{p}^! \to i_{\cG*} i_{\cG}^* (\pi')^! \overline{p}^! \to j_{\cU!} j_{\cU}^* (\pi')^! \overline{p}^![1] \xrightarrow[]{+1})(-k(n+1))[-2k(n+1)],\]
which, because $\overline{p} \circ \pi' = \pi \circ p_0$ with $p_0\colon \bV^0 \times \bhW \to \bV^0$ being the projection, is isomorphic to
\begin{equation} \label{eq-TriangleAffineRadon3} ( p_0^! \pi^! \to i_{\cG*} i_{\cG}^* p_0^! \pi^!\to j_{\cU!} j_{\cU}^* p_0^! \pi^! [1] \xrightarrow[]{+1})(-k(n+1))[-2k(n+1)].\end{equation}

Note that, by definition of $Z'$, the subvariety $\cG \subseteq \bV^0 \times \bhW$ is defined by $g^{(1)},\dots, g^{(k)}$, where if $z_0,\dots, z_n$ are coordinates on $\bV$ (hence, on $\bV^0$) and $w_0^{(i)},\dots, w_n^{(i)}$ are dual coordinates on the $i$th copy of $\widehat{\bV}$ (where $\bhW = \widehat{\bV}^k$), then
\[ g^{(i)} = \sum_{j=0}^n z_j w_j^{(i)}.\]

On the other hand, Base Change tells us that
\[ (\pi')^! (\widetilde{\pi}')_! \cong p_{\zeta!}' \circ (\tau')^!,\]
and so we have an isomorphism
\[ (\pi')^! \widetilde{\pi}'_! \phi_{\widetilde{g}} \widetilde{p}^![-k(n+1)] \cong  p_{\zeta!}' (\tau')^! \phi_{\widetilde{g}} \widetilde{p}^! [-k(n+1)].\]

As $\tau'$ is smooth, if we write $g_\zeta = (\tau')^*(\widetilde{g})$, we get by compatibility of vanishing cycles with smooth morphisms that this is isomorphic to
\[ p_{\zeta!}' \phi_{g_\zeta} (\tau')^! \widetilde{p}^! [-k(n+1)].\]

By definition of $\tau'$, we see that $(\tau')^*(\widetilde{g}) = \sum_{i=1}^k \zeta_i g^{(i)}$. Moreover, we have $\widetilde{p}\circ \tau' = \tau \circ p_{12}$, and so we are interested in the functor
\[ p_{\zeta!}' \phi_{g_\zeta} p_{12}^! \tau^![-k(n+1)].\]

Recall that we are applying this functor to Triangle \ref{eq-FLTriangle}. We have the diagram
\[ \begin{tikzcd} \{0\} \times \bV^0 \times \bhW \ar[r,"i_0'"] \ar[d] & \A^k_\zeta \times \bV^0 \times \bhW \ar[d,"p_{12}"] & \cU_0 \times \bhW \ar[l,swap,"j_{\cU_0}'"] \ar[d]  \\ \{0\} \times \bV^0  \ar[r,"i_0"] \ar[d] & \A^k_\zeta \times \bV^0 \ar[d,"\tau"] & \cU_0\ar[l,swap,"j_{\cU_0}"] \ar[d] \\ E \ar[r,"i_E"]&\widetilde{\bV} & \widetilde{\bV}^\circ \ar[l,swap,"j_\circ"]\end{tikzcd}.\]

Hence, $p_{12}^! \tau^!$ applied to Triangle \ref{eq-FLTriangle} is isomorphic to
\[ i_{0*}' (i_0')^! p_{12}^!\tau^!\widetilde{\pi}^! \to p_{12}^!\tau^!\widetilde{\pi}^! \to j_{\cU_0*}' (j_{\cU_0}')^! p_{12}^!\tau^!\widetilde{\pi}^! \xrightarrow[]{+1}.\]

The composition $\widetilde{\pi} \circ \tau \circ p_{12}$, which is the projection onto $\P$, can be factored as $\pi \circ p_0 \circ p'_{\zeta}$. In other words, we can rewrite this triangle as 
\begin{equation} \label{eq-FLTriangle2} i_{0*}' (i_0')^! (p'_\zeta)^! p_0^! \pi^!\to (p'_\zeta)^! p_0^! \pi^! \to j_{\cU_0*} j_{\cU_0}^! (p'_\zeta)^! p_0^! \pi^! \xrightarrow[]{+1}.\end{equation}

Recalling that $g_\zeta = \sum_{i=1}^k \zeta_i g^{(i)}$, we can now apply Theorem \ref{thm-MHMLocalization}, which says that the functor $p_{\zeta!}' \phi_{g_\zeta}[-k(n+1)]$ applied to Triangle \ref{eq-FLTriangle2} is naturally isomorphic to
\[ (p_0^! \pi^! \to i_{\cG*} i_\cG^* p_0^! \pi^! \to j_{\cU!} j_{\cU}^* p_0^! \pi^! [1] \xrightarrow[]{+1})[-k(n+1)].\]

This is Triangle \ref{eq-TriangleAffineRadon3}, but with a Tate twist by $(k(n+1))$ and a shift by $[k(n+1)]$, as desired.

The isomorphism of the dual triangles can be proven similarly, or it follows immediately by duality, using $\mathbf D \circ {\rm FL} = \overline{\rm FL} \circ \mathbf D = ({\rm FL} \circ \mathbf D)(-k(n+1))$.
\end{proof}

\section{Application to GKZ Systems} In this section, we use the comparison (Proposition \ref{prop-affineRadon} with $k=1$) between the Fourier-Laplace transform and the (affine) Radon transform to show that two naturally defined mixed Hodge module structures (one in \cite{ReicheltGKZ}*{Thm. 3.5} and the other in \cite{GKZ}*{Cor. 4.14}) on certain GKZ systems are isomorphic up to a Tate twist. This answers affirmatively the question in \cite{GKZ}*{Rmk. 4.15}.

We begin with the definition of GKZ systems, for more details, one should consult \cite{OriginalGKZ, SWGKZ,ReicheltGKZ, GKZ}. We use the notation of \cite{ReicheltGKZ}. Consider an integer $(d\times n)$-matrix $A$ with columns $a_1,\dots, a_n \in \Z^d$. Let $\mathbf L$ be the $\Z$-module of relations between the columns of $A$. We consider the cyclic $\cD_{\A_\lambda^n}$-module
\[ \cM_A^\beta = \cD_{\A_\lambda^n} /((\Box_\ell)_{\ell \in \mathbf L} + (E_k - \beta_k)),\]
where, for $\ell \in \mathbf L$,
\[ \Box_{\ell} = \prod_{i, \ell_i < 0} \de_{\lambda_i}^{-\ell_i} - \prod_{i, \ell_i >0} \de_{\lambda_i}^{\ell_i},\]
and $E_k = \sum_{i=1}^n a_{ki} \lambda_i \de_{\lambda_i}$ is the weighted Euler operator associated to the $k$th row of $A$. We let $\widehat{\cM}_A^\beta$ be the inverse Fourier transform of $\cM_A^\beta$ as a $\cD$-module.

In \cite{SWGKZ}, the authors define, for a fixed matrix $A$, a set of parameters $\beta$ which should be avoided in order to have a well behaved GKZ system. This is the set of \emph{strongly resonant parameters}, denoted ${\rm sRes}(A)$, whose definition we do not review here. The utility of this set is explained by Theorem \ref{thm-sRes} below.

Consider a torus $T = {\rm Spec}(\C[t_1^{\pm 1},\dots, t_d^{\pm 1}])$. The matrix $A$ defines for us a morphism
\[ h\colon T \to \A^n_\mu = \bV, \quad (t_1,\dots, t_d) \mapsto (t^{a_1},\dots, t^{a_n}),\]
where $t^{a_j} = \prod_{i=1}^d t_i^{a_{ij}}$ and whose image has closure equal to $Y' = {\rm Spec}(\C[\bN A])$, where $\bN A$ is the semigroup ring associated to the semigroup $A$. We say that the matrix $A$ is \emph{pointed} if $\bN A \cap (-\bN A) = 0$.

We have the $\cD_T$-module $\cO_T^\beta$ defined as the quotient
\[ \cD_T/ \cD_T(t_1 \de_{t_1}+ \beta_1,\dots, t_d \de_{t_d} + \beta_d).\]

The main starting point is the following theorem which gives the structure of the module $\widehat{\cM}_A^\beta$:
\begin{thm}\label{thm-sRes} $($\cite{SWGKZ}*{Thm. 3.6, Cor. 3.7}$)$ Let $A$ be a pointed $(d\times n)$-integer matrix whose columns span $\Z^d$. Then for the map $h\colon T \to \bV$ defined above, the following are equivalent: \begin{enumerate}
\item $\beta \notin {\rm sRes}(A)$.
\item $\widehat{\cM}_A^\beta \cong h_+ \cO_T^\beta$.
\end{enumerate}
\end{thm}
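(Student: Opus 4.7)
The plan is to construct a natural $\cD$-module morphism between the two sides and then establish that strong non-resonance is exactly what is needed to make it an isomorphism.

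First, I would compute $\widehat{\cM}_A^\beta$ explicitly by applying the inverse Fourier transform convention to the defining relations of $\cM_A^\beta$. Under the exchange $\lambda_i \leftrightarrow -\de_{\mu_i}$ (with $\de_{\lambda_i} \leftrightarrow \mu_i$), the box operator $\Box_\ell$, which is a \emph{pure polynomial} in the $\de_{\lambda_i}$, becomes (up to sign) the toric binomial $\mu^{\ell^+} - \mu^{\ell^-}$. Moreover the Euler relation $E_k - \beta_k$ becomes $-\sum_i a_{ki}(\de_{\mu_i}\mu_i) - \beta_k = -\sum_i a_{ki}(\mu_i\de_{\mu_i} + 1) - \beta_k$, i.e.\ a modified Euler operator $\widetilde{E}_k - \widetilde{\beta}_k$ with an explicit shift in $\beta$. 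So $\widehat{\cM}_A^\beta$ is the quotient of $\cD_{\bV}$ by the toric binomial ideal cutting out $Y'$ together with shifted Euler operators.

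Second, I would construct a morphism $\Phi\colon \widehat{\cM}_A^\beta \to h_+ \cO_T^\beta$ by specifying the image of the cyclic generator. Since $h\colon T \to \bV$ has image dense in $Y'$, the pushforward $h_+\cO_T^\beta$ is supported on $Y'$, so the toric binomials act as zero on a distinguished section (pulled back from the generator of $\cO_T^\beta$). The twisted Euler relations on $\cO_T^\beta$ translate, via $h^*\mu_i = t^{a_i}$, into precisely the shifted $\widetilde{E}_k - \widetilde{\beta}_k$ relations; a short bookkeeping check shows the shifts match. Thus $\Phi$ is a well-defined surjection onto the cyclic submodule generated by the natural section, and using that $h$ is an affine map with image open in $Y'$, one verifies that this cyclic submodule is all of $h_+ \cO_T^\beta$.

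Third, the core of the argument is to show that $\Phi$ has no kernel when $\beta \notin {\rm sRes}(A)$. I would proceed by restricting to $T \subseteq \bV$, where both sides visibly agree (since on $T$ the modules reduce to the rank-one connection $\cO_T^\beta$), and then control the behavior along the boundary $\bV \setminus T$ stratum by stratum. Concretely, one filters $\widehat{\cM}_A^\beta$ using the $\bZ^d$-grading induced by the Euler operators, and the obstruction to extending the isomorphism across a torus-fixed stratum corresponds to a local cohomology module of $\C[\bN A]$ whose support-degrees are exactly what the strongly resonant set is designed to detect; by the definition of ${\rm sRes}(A)$, avoiding this set forces these obstructions to vanish.

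The hard part will be the third step, and in particular the interplay between the combinatorics of ${\rm sRes}(A)$ and the $\cD$-module structure along the boundary. One cannot conclude merely by comparing holonomic ranks at generic points, since both sides can have the same rank while differing by a module supported on $\bV \setminus T$; the unpointed or resonant case produces exactly such extra summands. Making rigorous the passage from ``shifts of local cohomology degrees lie outside $-\beta + \bZ A$'' to ``$\ker(\Phi) = 0$'' is where the theorem's content really lies, and matching my conventions with those of \cite{SWGKZ} would require some care with signs and with the $\beta \mapsto \widetilde{\beta}$ shift introduced in the Fourier step.
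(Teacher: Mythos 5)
This statement is not proved in the paper at all: it is imported verbatim from Schulze--Walther (\cite{SWGKZ}*{Thm.~3.6, Cor.~3.7}), so the only meaningful comparison is with their argument. Your overall architecture --- Fourier-transform the defining ideal to obtain the toric binomial ideal plus shifted Euler operators, send the cyclic generator to the canonical section of $h_+\cO_T^\beta$, observe agreement over the open torus orbit, and blame the discrepancy on local cohomology of $\C[\bN A]$ whose quasi-degrees are what ${\rm sRes}(A)$ records --- is indeed the shape of the proof in \cite{SWGKZ}. But as written the proposal has three genuine gaps. First, the surjectivity you assert in step 2 is false in general: $h_+\cO_T^\beta$ need not be generated over $\cD_{\bV}$ by the image of the canonical section (already for $A=(1)$, $h_+\cO_{\C^*}=\C[x^{\pm 1}]$ is not generated by $1$), and the cokernel of $\Phi$ is exactly one of the obstructions that non-resonance is needed to kill; you cannot separate ``$\Phi$ is onto'' from ``$\beta\notin{\rm sRes}(A)$.''

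Second, the mechanism for step 3 is named as a goal but not supplied. The working proof realizes both sides as Euler--Koszul homology, $\widehat{\cM}_A^\beta=\cH_0(E-\beta;\C[\bN A])$ and $h_+\cO_T^\beta\simeq\cH_0(E-\beta;\C[\Z A])$, and feeds the short exact sequence $0\to\C[\bN A]\to\C[\Z A]\to Q\to 0$ into the long exact sequence of Euler--Koszul homology; the kernel and cokernel of $\Phi$ are then $\cH_1(E-\beta;Q)$ and $\cH_0(E-\beta;Q)$, whose vanishing is equivalent, via the Matusevich--Miller--Walther theory relating Euler--Koszul vanishing to quasi-degrees of local cohomology, to $\beta\notin{\rm sRes}(A)$. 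A ``filtration by the $\Z^d$-grading'' does not by itself yield this. Third, the theorem is an equivalence, and statement (2) asserts an \emph{abstract} isomorphism of $\cD$-modules, not that your particular $\Phi$ is one; so the implication $(2)\Rightarrow(1)$ requires showing that for resonant $\beta$ no isomorphism exists at all (in \cite{SWGKZ} this is extracted from the same homological computation, e.g.\ by comparing characteristic cycles or the failure of holonomic rank/restriction behavior), and your proposal does not address this direction.
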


For $\beta \in \Z^d$, we have a canonical isomorphism $\cO_T^\beta \cong \cO_T$. Hence, if $\beta \in \Z^d \setminus {\rm sRes}(A)$, then there is an isomorphism
\[ \widehat{\cM}_A^\beta \cong h_+ \cO_T.\]

So we obtain a mixed Hodge module structure on $\hat{\cM}_A^\beta$ by
\[ {}^H \widehat{\cM}_A^\beta = h_* \Q_T^H[d].\]

If moreover $A$ is homogenous, i.e., $(1,\dots, 1)^{\rm t} \in \Z(A^{\rm t})$, then the module ${}^H \widehat{\cM}_A^\beta$ is monodromic by \cite{ReicheltGKZ}*{Lem. 1.13}. Hence, as in \cite{GKZ}, we can apply the Fourier-Laplace transform for monodromic mixed Hodge modules and obtain a mixed Hodge module structure on $\cM_A^\beta$:
\[ {}^H \cM_A^\beta \cong {\rm FL}(h_* \Q_T^H[d]).\]

We have the following diagram
\[ \begin{tikzcd} T \ar[dd,"{\rm pr}"] \ar[rd,"h_0"] \ar[r,"h"] & \bV\\ & \bV\setminus \{0\} \ar[u,"j_0"] \ar[d,"\pi"] \\ \overline{T} \ar[r,"\overline{h}"] & \P(\bV) \end{tikzcd},\]
where ${\rm pr}\colon T \to \overline{T} = (\C^*)^{d-1}$ is the projection onto the last $d-1$ terms, $\overline{h}$ is the projectivization of $h$, and by definition, $h$ factors through $\bV \setminus \{0\}$.

Then, as in the proof of \cite{GKZ}*{Cor. 4.14}, we have
\[ h_* \Q_T^H[d] \cong h_*({\rm pr}^* \Q_{\overline{T}}^H)[d] \cong h_*({\rm pr}^! \Q_{\overline{T}}^H(-1)[-2])[d] \cong j_{0*} h_{0*}({\rm pr}^! \Q_{\overline{T}}^H)(-1)[d-2] \]
\[\cong j_{0*} \pi^!(\overline{h}_* \Q_{\overline{T}}^H)(-1)[d-2].\]

By applying ${\rm FL}$, we get an isomorphism
\[ {}^H \cM_A^\beta \cong {\rm FL}(j_{0*} \pi^!(\overline{h}_*\Q_{\overline{T}}^H))(-1)[d-2],\]
and the right hand side is in precisely the form to compare with the affine Radon transformation. By Proposition \ref{prop-affineRadon} with $k = 1$ and $n$ in place of $n+1$, the right hand side is isomorphic to
\[ \pi_{2!}^{C'} \pi_1^{C'*}(\overline{h}_*\Q_{\overline{T}}^H)(n-1)[d+n-1].\]

As ${}^H \cM_A^\beta$ is a single mixed Hodge module in degree 0, this gives an isomorphism
\[ {}^H \cM_A^\beta \cong \cH^{d+n-1} \pi_{2!}^{C'} \pi_1^{C'*}(\overline{h}_*\Q_{\overline{T}}^H)(n-1),\]
where the right hand side is the definition of the mixed Hodge module structure from \cite{ReicheltGKZ}*{Defi. 3.2}, up to a Tate twist by $(n-1) = \dim \P(V)$.

\section*{References}
\begin{biblist}

\bib{GequivMHM}{article}{
	author = {Achar, P.},
	date-added = {2024-03-01 13:13:15 -0500},
	date-modified = {2024-03-01 13:13:52 -0500},
	journal = {Lecture notes from the Clay Mathematics Institute workshop on Mixed Hodge Modules and Applications},
	title = {Equivariant mixed {Hodge} modules},
	year = {2013}}

\bib{Brylinski}{article}{
	author = {Brylinski, J-L.},
	date-added = {2023-01-15 13:43:37 -0500},
	date-modified = {2023-08-30 11:38:01 -0400},
	journal = {Ast\'{e}risque},
	number = {141},
	pages = {3--134},
	title = {Transformations canoniques, dualit\'{e} projective, th\'{e}orie de {Lefschetz}, transformations de {Fourier} et sommes trigonom\'{e}triques},
	volume = {140},
	year = {1986}}

\bib{BMS}{article}{
	author = {Budur, N.},
	author = {{Musta\c{t}\u{a}}, M.},
	author = {Saito, M.},
	date-added = {2022-11-16 13:58:45 -0500},
	date-modified = {2023-02-03 13:18:48 -0500},
	journal = {Compos. Math.},
	number = {3},
	pages = {779--797},
	title = {Bernstein-{Sato} polynomials of arbitrary varieties},
	volume = {142},
	year = {2006}}

\bib{CD}{article}{
	author = {Chen, Q.},
	author = {Dirks, B.},
	date-added = {2022-11-16 13:57:44 -0500},
	date-modified = {2023-07-14 15:04:48 -0400},
	journal = {Selecta Math. (N.S.)},
	number = {4},
	title = {On {V}-filtration, {Hodge} filtration and {Fourier} transform},
	volume = {29},
	year = {2023}}

\bib{CDMO}{article}{
	author = {Chen, Q.}
	author = {Dirks, B.},
	author = {{Musta\c{t}\u{a}}, M.},
	author = {Olano, S.},
	date-added = {2023-01-13 14:53:23 -0600},
	date-modified = {2023-07-18 15:35:26 -0400},
	journal = {arXiv math.AG},
	title = {V-filtrations and minimal exponents for local complete intersection singularities},
	volume = {2208.03277},
	year = {2022}}

\bib{CDS}{article}{
	author = {Chen, Q.},
	author = {Dirks, B.},
	author = {Saito, M.},
	date-added = {2023-09-08 11:40:13 -0400},
	date-modified = {2023-09-20 15:29:56 -0400},
	journal = {Submitted},
	title = {Verdier specialization and restrictions of {H}odge modules},
	year = {2023}}
	
	\bib{RadonFourier}{article}{
	author = {D'Agnolo, A.},
	author = {Eastwood, M.},
	date-added = {2024-03-01 13:14:18 -0500},
	date-modified = {2024-03-01 13:14:59 -0500},
	journal = {Advances in Mathematics},
	number = {2},
	pages = {452--485},
	title = {Radon and {Fourier} transforms for {D}-modules},
	volume = {180},
	year = {2003}}
	
	\bib{Davison}{article}{
	author = {Davison, B.},
	date-added = {2024-03-23 08:09:14 -0400},
	date-modified = {2024-03-23 08:11:00 -0400},
	journal = {Q. J. Math.},
	number = {2},
	pages = {635--703},
	title = {The critical CoHA of a quiver with potential},
	volume = {68},
	year = {2017}}

\bib{D-Microlocal}{article}{
	author = {Dirks, B.},
	date-added = {2024-03-01 13:30:33 -0500},
	date-modified = {2024-03-01 13:31:27 -0500},
	journal = {arXiv preprint: 2310.15277},
	title = {Some applications of microlocalization for local complete intersection subvarieties},
	year = {2023}}

\bib{OriginalGKZ}{article}{
	author = {Gal'fand, I.},
	author = {Kapranov, M.},
	author = {Zelevinsky, A.},
	date-added = {2024-04-03 10:56:31 -0400},
	date-modified = {2024-04-03 10:57:19 -0400},
	journal = {Adv. Math.},
	number = {2},
	pages = {255--271},
	title = {Generalized {Euler} integrals and {A}-hypergeometric functions},
	volume = {84},
	year = {1990}}

\bib{Ginzburg}{article}{
	author = {Ginzburg, V.},
	date-added = {2023-01-15 14:12:14 -0500},
	date-modified = {2023-01-15 14:12:50 -0500},
	journal = {Invent. Math.},
	number = {2},
	pages = {327--402},
	title = {Characteristic varieties and vanishing cycles},
	volume = {84},
	year = {1986}}

\bib{HTT}{book}{
	author = {Hotta, R.}
	author = {Takeuchi, K},
	author = {Tanisaki, T.},
	date-added = {2022-11-16 13:54:38 -0500},
	date-modified = {2023-07-18 15:37:22 -0400},
	publisher = {{Birkha\"{u}ser} Boston},
	title = {D-{Modules}, {Perverse} {Sheaves}, and {Representation} {Theory}},
	year = {2008}}

\bib{KashShap}{book}{
	author = {Kashiwara, M.}
	author = {Schapira, P.},
	date-added = {2023-01-15 14:10:45 -0500},
	date-modified = {2023-07-18 15:36:06 -0400},
	edition = {1},
	publisher = {Springer Berlin, Heidelberg},
	series = {Grundlehren der mathematischen Wissenschaften},
	title = {Sheaves on {Manifolds}},
	volume = {292},
	year = {1990}}

\bib{MaximVanCycles}{article}{
	author = {Maxim, L.},
	date-added = {2024-02-29 18:24:13 -0500},
	date-modified = {2024-02-29 18:25:05 -0500},
	journal = {Journal of the Australian Mathematical Society},
	number = {3},
	pages = {371--415},
	title = {Notes on vanishing cycles and applications},
	volume = {109},
	year = {2020}}

\bib{ThomSebastianiHodge}{article}{
	author = {Maxim, L.},
	author = {Saito, M.},
	author = {{Sch{\"u}rmann}, J.},
	date-added = {2024-03-05 12:12:30 -0500},
	date-modified = {2024-03-05 12:13:48 -0500},
	journal = {Int. Math. Res. Not.},
	number = {1},
	pages = {91--111},
	title = {Thom-{Sebastiani} {Theorems} for {Filtered} {$\mathcal D$-Modules} and for {Multiplier} {Ideals}},
	volume = {2020},
	year = {2020}}

\bib{ReicheltGKZ}{article}{
	author = {Reichelt, T.},
	date-added = {2024-04-03 10:59:27 -0400},
	date-modified = {2024-04-03 11:00:14 -0400},
	journal = {Compos. Math.},
	pages = {911--941},
	title = {{Laurent} {Polynomials}, {GKZ}-hypergeometric {Systems} and {Mixed} {Hodge} {Modules}},
	volume = {150},
	year = {2014}}

\bib{ReicheltRadon}{article}{
	author = {Reichelt, T.},
	date-added = {2024-04-05 14:17:47 -0400},
	date-modified = {2024-04-05 17:31:43 -0400},
	journal = {Ann. Inst. Fourier},
	number = {4},
	pages = {1577--1616},
	title = {A comparison theorem between {Radon} and {Fourier}-{Laplace} transforms for {D}-modules},
	volume = {65},
	year = {2015}}
	
	\bib{GKZ}{article}{
	author = {Reichelt, T.},
	author = {Walther, U.},
	date-added = {2023-01-15 14:05:23 -0500},
	date-modified = {2023-01-15 14:06:15 -0500},
	journal = {Amer. J. Math.},
	number = {5},
	pages = {1437--1484},
	title = {Weight filtrations on {GKZ}-Systems},
	volume = {144},
	year = {2022}}

\bib{MHMProj}{misc}{
	author = {Sabbah, C.}
	author = {Schnell, C.},
	date-added = {2023-01-15 13:20:19 -0500},
	date-modified = {2023-10-05 16:32:51 -0400},
	title = {{Mixed} {Hodge} {Modules} {Project}},
	url = {http://www.math.polytechnique.fr/cmat/sabbah/MHMProject/mhm.html},
	urldate = {04/04/2022},
	bdsk-url-1 = {http://www.math.polytechnique.fr/cmat/sabbah/MHMProject/mhm.html}}

\bib{SaitoMHP}{article}{
	author = {Saito, M.},
	date-added = {2022-11-16 13:00:53 -0500},
	date-modified = {2023-01-15 13:28:49 -0500},
	journal = {Publ. Res. Inst. Math. Sci.},
	number = {6},
	pages = {849--995},
	title = {Modules de {Hodge} {Polarisables}},
	volume = {24},
	year = {1988}}

\bib{SaitoMHM}{article}{
	author = {Saito, M.},
	date-added = {2022-11-16 13:03:29 -0500},
	date-modified = {2023-01-15 13:28:38 -0500},
	journal = {Publ. Res. Inst. Math. Sci.},
	number = {2},
	pages = {221--333},
	title = {Mixed {Hodge} {Modules}},
	volume = {26},
	year = {1990}}

\bib{SaitoMicrolocal}{article}{
	author = {Saito, M.},
	date-added = {2022-11-16 13:05:13 -0500},
	date-modified = {2022-11-16 13:05:57 -0500},
	journal = {Bull. Soc. Math. France},
	number = {2},
	pages = {163--184},
	title = {On microlocal b-function},
	volume = {122},
	year = {1994}}
	
	\bib{MonoMHM1}{article}{
	author = {Saito, T.},
	date-added = {2023-01-15 14:07:14 -0500},
	date-modified = {2023-01-15 14:08:07 -0500},
	journal = {to appear in Journal f\"{u}r die reine und angewandte Mathematik},
	title = {A description of monodromic mixed {Hodge} modules},
	year = {2022}}

\bib{Schefers}{article}{
	author = {Schefers, K.},
	date-added = {2024-03-23 08:08:13 -0400},
	date-modified = {2024-03-23 08:08:41 -0400},
	journal = {arxiv preprint: 2205.12436},
	title = {An equivalence between vanishing cycles and microlocalization},
	year = {2022}}

\bib{Schnell}{book}{
	author = {Schnell, C.},
	date-added = {2023-01-15 14:44:49 -0500},
	date-modified = {2023-04-08 12:06:00 -0400},
	journal = {Representation theory, automorphic forms \& complex geometry},
	number = {1405.3096},
	pages = {27--80},
	publisher = {Int. Press, Somerville, MA},
	title = {An overview of {Morihiko} {Saito}'s theory of mixed {Hodge} modules},
	year = {2014}}

\bib{SWGKZ}{article}{
	author = {Schulze, M.}
	author = {Walther, U.},
	date-added = {2024-04-03 10:58:24 -0400},
	date-modified = {2024-04-03 11:01:36 -0400},
	journal = {J. Algebra},
	number = {9},
	pages = {3392--3409},
	title = {Hypergeometric {$\mathcal D$}-modules and twisted {Gauss}-{Manin} systems},
	volume = {322},
	year = {2009}}

\end{biblist}

\end{document}